\newtheorem{theorem}{Theorem}[section]
\newtheorem{proposition}[theorem]{Proposition}
\newtheorem{lemma}[theorem]{Lemma}
\newtheorem{corollary}[theorem]{Corollary}
\newtheorem{question}[theorem]{Question}
\theoremstyle{definition}
\newtheorem{definition}[theorem]{Definition}
\newtheorem*{remark}{Remark}
\newcommand{\Z}{\mathbb{Z}}
\newcommand{\N}{\mathbb{N}}
\newcommand{\R}{\mathbb{R}}
\renewcommand{\P}{\mathbb{P}}
\newcommand{\E}{\mathrm{\textbf{E}}}
\newcommand{\F}{\mathcal{F}}
\newcommand{\G}{\mathcal{G}}
\renewcommand{\L}{\mathcal{L}}
\newcommand{\C}{\mathbb{C}}
\renewcommand{\H}{\mathbb{H}}
\newcommand{\dist}{\operatorname{dist}}
\renewcommand{\Im}{\textrm{Im}}
\renewcommand{\Re}{\textrm{Re}}
\newcommand{\SLE}{\textrm{SLE}}
\newcommand{\SLEk}{\textrm{SLE}(\kappa)}
\newcommand{\Gt}{\G_t}
\newcommand{\prob}[1]{\P \left ( #1 \right)}
\newcommand{\probalph}[2]{\P_{#2} \left( #1 \right)}
\newcommand{\palph}{\P_{\alpha}}
\newcommand{\expect}[1]{\mathrm{\textbf{E}} \left[ #1 \right]}
\newcommand{\subexpect}[3]{\mathrm{\textbf{E}}_{#2}^{#3} \left[ #1 \right]}
\newcommand{\condsubexpect}[4]{\mathrm{\textbf{E}}_{#3}^{#4} \left[ \left. #1 \right| #2 \right]}
\newcommand{\indicate}[1]{\mathbf{1} \left \{ #1 \right \}}
\newcommand{\ep}[1]{#1_{\epsilon}}
\newcommand{\haussdim}[0]{\mathrm{dim_H} \,}
\newcommand{\Eze}{I(z, \epsilon) \cap K = \emptyset}
\newcommand{\EE}[2]{I(#1, #2) \cap K = \emptyset}
\newcommand{\phize}{\phi_{I(z, \epsilon)}}
\newcommand{\fze}{f_{z, \epsilon}}
\newcommand{\ls}{\L_{\lambda}}
\newcommand{\leb}[1]{\left| #1 \right|}
\begin{document}

\title{Bridge Decomposition of Restriction Measures}
\author{Tom Alberts}
\address{Department of Mathematics \\
University of Toronto \\
Toronto, ON, Canada}
\thanks{Research of Tom Alberts supported in part by NSF Grant OISE 0730136, and a postdoctoral fellowship from the Natural Sciences and Engineering Research Council of Canada. Research of Hugo Duminil-Copin supported in part by project MRTN-CT-2006-035651, Acronym CODY, of the European Commission, and a grant from the Swiss National Science Foundation.}
\email{alberts@math.toronto.edu}

\author{Hugo Duminil-Copin}
\address{\'Ecole Normale Sup\'erieure \\
Paris, France \\}
\email{hugo.duminil@ens.fr}

\date{}

\begin{abstract}
Motivated by Kesten's bridge decomposition for two-dimensional self-avoiding walks in the upper half plane, we show that the conjectured scaling limit of the half-plane SAW, the $\SLE(8/3)$ process, also has an appropriately defined bridge decomposition. This continuum decomposition turns out to entirely be a consequence of the restriction property of $\SLE(8/3)$, and as a result can be generalized to the wider class of \textit{restriction measures}. Specifically we show that the restriction hulls with index less than one can be decomposed into a Poisson Point Process of \textit{irreducible bridges} in a way that is similar to It\^{o}'s excursion decomposition of a Brownian motion according to its zeros.
\end{abstract}

\maketitle

\section{Introduction}

One of the greatest successes of the Schramm-Loewner Evolution (SLE), and the broader study of two-dimensional conformally invariant stochastic processes that it enabled, has been the ability to describe the scaling limits of two-dimensional lattice models that arise in statistical mechanics. There are many known examples: $\SLE(2)$ as the scaling limit for loop erased random walk, $\SLE(3)$ as the scaling limit of critical Ising interfaces, $\SLE(6)$ as the limit of percolation exploration paths, etc. One of the most important open problems in the field is to prove that the scaling limit of the infinite \textit{self-avoiding walk} in the upper half plane $\H$ is given by $\SLE(8/3)$. It is known that \textit{if} the scaling limit of half-plane SAWs exists \textit{and} is conformally invariant, then the scaling limit must be SLE($8/3$). Both the existence and conformal invariance are widely believed to be true, yet proofs remain elusive. For an accessible and relatively recent source on the current status of this problem, we refer the reader to \cite{lsw:saw}. Even without formally establishing the scaling limit result, it is often still possible to independently check that the various well-studied properties of half-plane SAWs carry over to the SLE($8/3$) process. The main results of this paper should be seen in this context. In \cite{kesten:saw1} it is shown that half-plane SAWs admit what is called a \textit{bridge decomposition}, which raised the question of finding a similar decomposition for SLE($8/3$). In this paper we will show that an appropriately defined continuum decomposition does exist, and we will describe some of its properties. A somewhat surprising aspect of the existence is that it depends only on the fact that SLE($8/3$) satisfies the restriction property, and not on the fine details of the process itself. Specifically, the decomposition has no explicit reliance on the Loewner equation. Using this fact we are able to extend the continuum bridge decomposition beyond SLE($8/3$) to a wider class of random sets whose laws are given by the so-called restriction measures. These probability measures were introduced and studied extensively in \cite{lsw:conformal_restriction}, and they occupy an important position in the hierarchy of two-dimensional conformally invariant processes. We will give a more detailed description of restriction measures in Section \ref{PrelimSection}, but we emphasize that the reader who is uninterested in general restriction measures will lose nothing by focusing on SLE($8/3$) as the canonical one.

\subsection{Motivation: Bridge Decomposition of SAWs}

To motivate the continuum bridge decomposition, we first describe the corresponding decomposition for half-plane SAWs. This is thoroughly described in \cite{madras_slade:saw_book}, along with many other interesting properties of the self-avoiding walk. In the discrete setting we will work exclusively on the lattice $\Z + i \Z$. An $N$-step self-avoiding walk $\omega$ on $\Z + i\Z$ is a sequence of lattice sites $[\omega(0), \omega(1), \ldots, \omega(N)]$ satisfying $\leb{\omega(j+1) - \omega(j)} = 1$ and $\omega(i) \neq \omega(j)$ for $i \neq j$. We will write $\leb{\omega} = N$ to denote the length of $\omega$. Given walks $\omega$ and $\omega'$ of length $N$ and $M$ (respectively), the \textit{concatenation} of $\omega$ and $\omega'$ is defined by
\begin{align*}
\omega \oplus \omega' = \left[ \omega(0), \ldots, \omega(N), \omega'(1) + \omega(N), \ldots, \omega'(M) + \omega(N) \right].
\end{align*}
Letting $c_N$ denote the number of self-avoiding walks of length $N$, it is easy to see that
\begin{align*}
c_{N+M} \leq c_N c_M
\end{align*}
since any SAW of length $N+M$ can always be written as the concatenation of two SAWs of length $N$ and $M$. A standard submultiplicativity argument then proves the existence of a constant $\mu > 0$ such that
\begin{align}\label{connectiveConstant}
\lim_{N \to \infty} \frac{\log c_N}{N} = \log \mu,
\end{align}
or $c_N \approx \mu^N$ in the common shorthand. The exact value of $\mu$ is not known, nor is it expected to be any special value, but numerically it has been shown that $\mu$ is close to $2.638$ (see \cite[Section 1.2]{madras_slade:saw_book}).

We will mostly deal with half-plane SAWs rooted at the origin, i.e. self-avoiding paths $\omega$ such that $\omega(0) = 0$ and $\Im \, \omega(j) > 0$ for all $j > 0$. Let $\mathcal{H}$ denote the set of all such walks. The most commonly used probability measure on $\mathcal{H}$, and the one that we will consider throughout, is the weak limit of the uniform measure on $\{ \omega \in \mathcal{H} : \leb{\omega} =N \}$, as $N \to \infty$. This limit is proven to exist in \cite{madras_slade:saw_book}, and again in the appendix of \cite{lsw:saw}. The key element of both proofs is, in fact, the \textit{bridge decomposition} of the walks in $\mathcal{H}$, the study of which was initiated by Kesten \cite{kesten:saw1, kesten:saw2} and goes as follows. A \textit{bridge of length $N$} is a self-avoiding walk $\omega$ such that $\leb{\omega} = N$ and
\begin{align*}
\Im \, \omega(0) < \Im \, \omega(j) \leq \Im \, \omega(N), \quad 1 \leq j \leq N.
\end{align*}
Note that the concatenation of any two bridges is still a bridge, but that not every bridge is the concatenation of two shorter ones. A bridge with the latter property is said to be \textit{irreducible}, and such bridges are the basic building blocks of walks in $\mathcal{H}$. Indeed, given any $\omega \in \mathcal{H}$, one performs a bridge decomposition of $\omega$ by searching for the smallest time $j$ such that $\Im \, \omega(k) \leq \Im \, \omega(j)$ for $k \leq j$ and $\Im \, \omega(k) > \Im \, \omega(j)$ for $k > j$. By the minimality of $j$, the subpath $[w(0), w(1), \ldots, w(j)]$ is an irreducible bridge, and the shifted subpath $[0, w(j+1) - w(j), \ldots, w(k) - w(j), \ldots]$ for $k \geq j$ is a new element of $\mathcal{H}$ on which we may repeat this procedure. Iterating in this fashion produces the bridge decomposition of $\omega$ into a sequence of irreducible bridges, and the decomposition is clearly unique\footnote{There is a minor technicality to point out here: if the walk oscillates infinitely often in the vertical direction without approaching some limit (including infinity) the decomposition algorithm will terminate after finitely many iterations and the remaining part of the walk will not be a bridge. However, we will see in the next paragraph that this is a probability zero event under the standard measure on $\mathcal{H}$, and that the vertical component of the SAW always goes to infinity with probability one.}.

Much of the study of the infinite self-avoiding walk in the upper half plane therefore reduces to the study of irreducible bridges. Let $\mathcal{B}$ be the set of all irreducible bridges rooted at the origin, and $\lambda_N$ be the number of length $N$ elements of $\mathcal{B}$. Using some clever tricks involving generating functions, Kesten was able to prove what is now called \textbf{Kesten's relation}:
\begin{align}\label{kesten1}
\sum_{N \geq 1} \lambda_N \mu^{-N} = \sum_{\omega \in \mathcal{B}} \mu^{-\leb{\omega}} = 1,
\end{align}
for the same $\mu$ as in \eqref{connectiveConstant} (for proofs see \cite{kesten:saw1} or \cite[Section 4.3]{madras_slade:saw_book}). Kesten's relation shows that $\textbf{P}(\omega) := \mu^{-\leb{\omega}}$ is a probability measure on $\mathcal{B}$, and by concatenating together an independent sequence of irreducible bridges each sampled from $\textbf{P}$, a probability measure is induced on $\mathcal{H}$. In \cite{madras_slade:saw_book} and \cite{lsw:saw}, the latter measure is shown to be the only possible candidate for the weak limit of the uniform measure on $\{ \omega \in \mathcal{H} : \leb{\omega} =N \}$, and therefore the question of existence of this weak limit is immediately settled.

The bridge decomposition shows that infinite half-plane SAWs have a renewal structure to them. At the end of each irreducible bridge the future path of the walk lies entirely in the half-plane above the horizontal line where the bridge ended. The future path is again a concatenation of a sequence of irreducible bridges, so that its law is the same as the law of the original path and the future path is independent of the past. In this sense the walk renews itself whenever it is at the end of an irreducible bridge, and it is appropriate to call such times renewal times. Note that the renewal times are functions of the \textit{entire} half-plane SAW, since the algorithm for the bridge decomposition depends upon knowing the entire walk.

\begin{figure}
\begin{center}
\includegraphics[width=15cm, height=12cm]{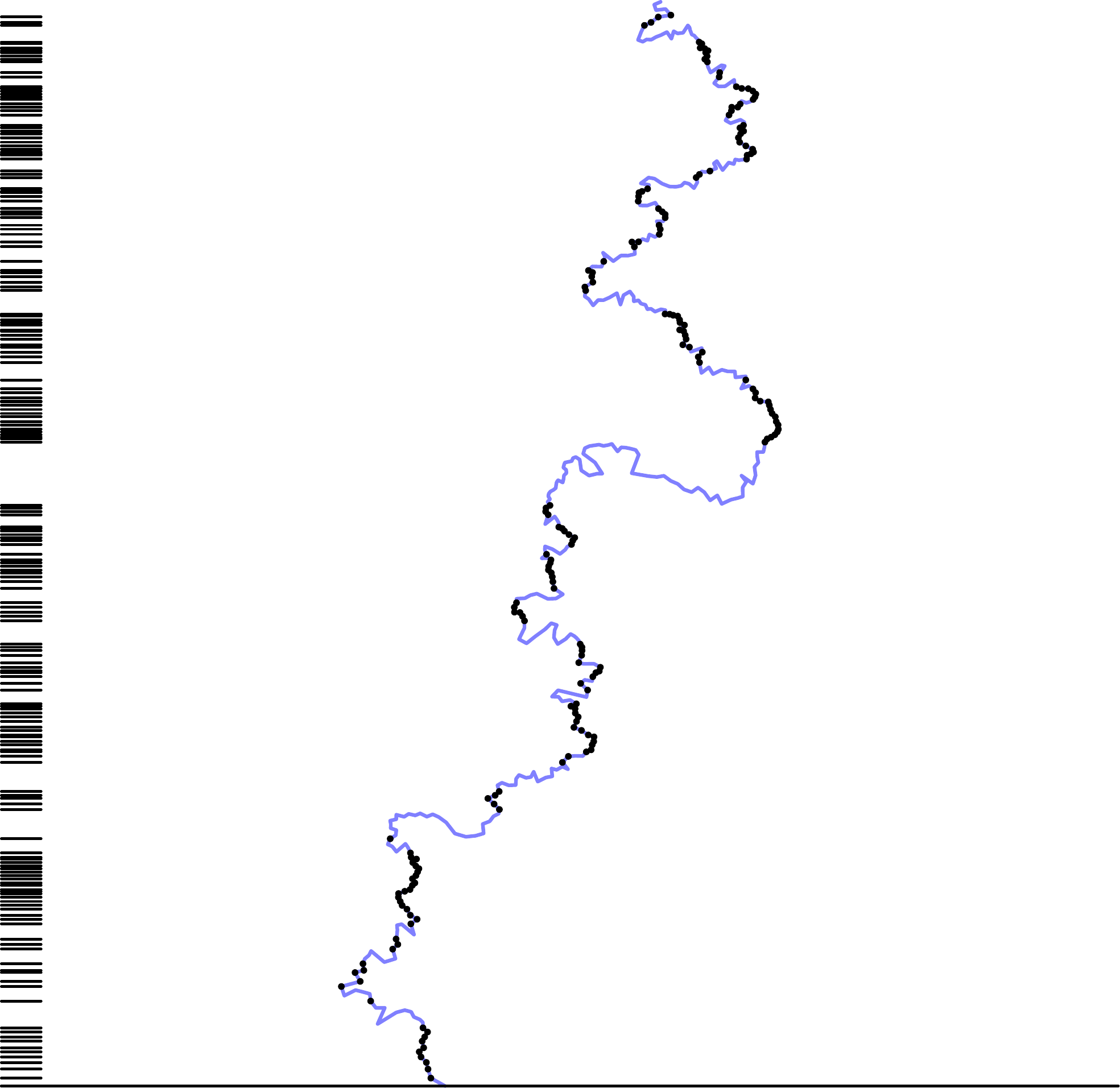}
\caption{A sample SLE(8/3) curve in the lighter colour, with the bridge points superimposed in black. The bridge heights are plotted on the vertical axis. The $\SLE(8/3)$ curve is generated by Tom Kennedy's algorithm and freely available graphics program; see \cite{kennedy:algo}.} \label{points-fig}
\end{center}
\end{figure}

\subsection{Statement of Results: The Continuum Bridge Decomposition}

In the continuum we will show that an analogue of bridge times exists for the so-called \textit{restriction hulls} in $\H$, and that these times are also renewal times. Using this renewal structure, we proceed to decompose the restriction hulls into countably many continuum irreducible bridges. This continuum decomposition most closely resembles the discrete one in the case of $\SLE(8/3)$, but we will see that it also holds for more general restriction hulls with parameter $\alpha < 1$. We will give a more in-depth description of the restriction hulls in Section \ref{PrelimSection}, but provide a brief summary here.

Roughly speaking, a restriction hull is a stochastic process taking values in the space of unbounded hulls in $\H$.  An \textit{unbounded hull} is a closed, connected subset $K \subset \overline{\H}$ such that $\H \backslash K$ consists of exactly two connected components. The unbounded hulls that we will consider are closed, connected subsets of $\overline{\H}$ that connect $0$ and $\infty$, and intersect $\R$ only at zero; moreover it will be possible to time parameterize them into a growing family $(K_t, t \geq 0)$ of \textit{hulls} (closed, connected subsets $A$ of $\overline{\H}$ such that $\H \backslash A$ is simply connected with exactly one connected component) with $K_{\infty} = K$. This time parameterization is provided by the well-known construction of restriction hulls that was originally laid out in \cite{lsw:conformal_restriction} and \cite{lawler_werner:loop_soup}. Those papers show that attaching the filled-in loops from a realization of the Brownian Loop Soup to an independent SLE curve induces a restriction law on unbounded hulls in $\H$. By changing the $\kappa$ parameter for the SLE and the intensity parameter for the loop soup (in a specific way) an entire family $\palph$ of restriction measures on unbounded hulls is created. Here $\alpha$ is a real parameter with $\alpha \geq 5/8$.

The definition of a continuum bridge is motivated by the algorithm for decomposing half-plane SAWs into irreducible bridges, which essentially searches for horizontal lines that separate the future path from the past.

\begin{definition}
Let $K$ be a hull (unbounded or not).
\begin{itemize}
\item Call $L > 0$ a \textit{bridge height} for $K$ if the horizontal line $y=L$ intersects $K$ at exactly one point, i.e. if $K \cap \{ y=L \}$ is a singleton.

\item If $z \in \H$ is such a singleton then we call it a \textit{bridge point}. Let $C$ be the set of bridge points of $K$, and let $D$ be the set of bridge heights (note that $D = \{ \Im \, z : z \in C \}$).

\item Let $G$ be the set of \textit{bridge times} at which the hull is at a bridge point, which can be written as $G := \{ t \geq 0 : K_t \backslash K_{t-} \cap C \neq \emptyset \}$.

\item A \textit{continuum bridge} is a segment of the bridge between two bridge times, i.e. if $s, t \in G$ with $s < t$ then the hull $K_{t-} \backslash K_{s-}$ is a bridge. A continuum bridge is said to be \textit{irreducible} if it contains no bridge points (other than the starting and ending points).
\end{itemize}
\end{definition}

Note that bridge heights, points and times are all functions of the \textit{entire} hull $K$. A subset of $K$ is, by itself, not enough to determine $C, D$ or $G$. At any fixed time $t \geq 0$ it is possible to determine what are the bridge points of the hull $K_t$, but not which of those are bridge points of the entire hull $K_{\infty} = K$, since some of the bridge points of $K_t$ may ultimately be destroyed by the future hull as it grows.
\medbreak
There are two main steps behind the continuum bridge decomposition. The first is to show that bridge points actually exist for hulls with $\alpha < 1$, which is not a priori clear. We do this by calculating the almost sure Hausdorff dimensions of $C$ and $D$ and showing that they are strictly larger than zero (and in fact the same). Specifically we will show the following:

\begin{theorem} \label{BridgeDimension}
Suppose $K$ has the law of $\palph$, then
\begin{enumerate}
\item \label{prop:scaling} the laws of $C$ and $D$ are scale invariant (i.e. $rC \equiv C$ and $rD \equiv D$ for all $r > 0$),
\item \label{prop:perfect} $C$ and $D$ are almost surely perfect (i.e. closed and without isolated points),
\item \label{prop:constantdim} the Hausdorff dimensions of both $C$ and $D$ are constant, $\palph-a.s.$,
\item \label{prop:hdim} $\haussdim{C} = \haussdim{D} = \max (2 - 2\alpha, 0), \,\, \palph-a.s.$,
\item \label{prop:emptyC}  $C$ and $D$ are empty, $\palph-a.s.$ if and only if $\alpha\geq 1$. 
\end{enumerate}
\end{theorem}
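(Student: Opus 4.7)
My plan is to prove the five claims in an interleaved order: (1) is immediate from the scale invariance of $\palph$; (4) is the main computation and yields the values needed for (3) and (5); and (2) follows from (4) together with the connectedness of $K$. The engine of the calculation is the restriction formula: for any closed $A \subset \overline{\H}$ bounded away from $0$, $\palph(K \cap A = \emptyset) = \phi_A'(0)^\alpha$, where $\phi_A : \H \setminus A \to \H$ is the unique conformal map fixing $0$ and $\infty$ with hydrodynamic normalization at infinity. For (1), the scale covariance of the definition of a bridge point/height gives $(rC, rD) \stackrel{d}{=} (C, D)$ at once from $rK \stackrel{d}{=} K$.

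The heart of the proof is an asymptotic analysis of the one-point function $p(\epsilon) := \palph(D \cap [1, 1+\epsilon] \neq \emptyset)$. I would rewrite the ``bridge near height $1$'' event as the intersection of (a) the hull crossing the line $y = 1$, and (b) the hull avoiding the two horizontal half-lines $\{y = 1, x \leq x_0 - \delta\}$ and $\{y = 1, x \geq x_0 + \delta\}$ on either side of the crossing point --- a restriction event on a two-slit domain to which the restriction formula applies. Optimizing over the placement and carrying out the short-distance asymptotics of the conformal derivative yields $p(\epsilon) \asymp \epsilon^{2\alpha - 1}$, and by scale invariance $\palph(D \cap [L, L+\epsilon] \neq \emptyset) \asymp (\epsilon/L)^{2\alpha - 1}$. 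Summing over a grid of $\epsilon$-intervals covering $[1,2]$ gives expected cover count $\lesssim \epsilon^{2\alpha - 2}$, which yields $\haussdim D \leq 2 - 2\alpha$ when $\alpha < 1$ and the emptiness of $D$ when $\alpha > 1$ (since the expected count vanishes). For the matching lower bound (when $\alpha < 1$), I would apply Frostman's lemma to the random measure $d\mu_\epsilon(L) = p(\epsilon)^{-1} \mathbf{1}\{D \cap [L, L+\epsilon] \neq \emptyset\}\,dL$ and prove a two-point bound of the form
\begin{align*}
\palph(D \cap [L_1, L_1+\epsilon] \neq \emptyset,\ D \cap [L_2, L_2+\epsilon] \neq \emptyset) \leq C\, p(\epsilon)^2 |L_2 - L_1|^{-(2\alpha-1)}.
\end{align*}
The restriction property applied after the bridge crossing at height near $L_1$ shows that the hull above is an independent $\palph$-sample rooted at the bridge point, reducing the estimate to a product of one-point probabilities at the appropriate scales. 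Standard energy estimates then give $\haussdim D \geq 2 - 2\alpha$ with positive probability.

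A zero-one argument based on scale invariance (using the dimension and $\{D = \emptyset\}$ as scale-invariant events, coupled with the renewal property below) upgrades these statements to almost sure ones, completing (3), (4), and (5). For (2), the intersection $K \cap \{y = L\}$ is never empty, because $K$ is a connected subset of $\overline{\H}$ containing $0$ and running to $\infty$, so its projection onto the imaginary axis is $[0, \infty)$; consequently $L \notin D$ if and only if $K \cap \{y = L\}$ contains at least two points, a condition that is stable under small perturbations of $L$ (with a short separate argument on the measure-zero set of tangential heights), giving closedness. The no-isolated-points statement is then a renewal observation: given $L \in D$, the restriction property applied above the bridge point shows that the hull continued above $L$ is itself a translated $\palph$-sample, so its bridge set accumulates at height $0$ by the nonemptiness half of (5), which translates back to $D$ accumulating at $L$ from above.

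The sharpest technical hurdle is the one-point asymptotic $p(\epsilon) \asymp \epsilon^{2\alpha - 1}$: the uniqueness-of-intersection constraint in the definition of a bridge has to be carefully reformulated as a restriction event on a two-slit domain, and the short-distance asymptotics of the associated conformal derivative must be controlled sharply on both sides. A secondary subtlety is that the renewal step underpinning the two-point bound is morally the continuum bridge decomposition itself, so the estimate must be justified directly from the restriction property in order to avoid circularity with the paper's main theorem.
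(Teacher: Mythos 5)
Your overall strategy---restriction formula plus one- and two-point estimates feeding a standard Hausdorff-dimension argument, scaling for part (1), emptiness from vanishing expected box counts for $\alpha > 1$, and renewal for the no-isolated-points half of (2)---matches the paper closely, including your flagged concern about circularity in the two-point bound (the paper avoids it by computing $\phi_{I(z,\epsilon_z)\cup I(w,\epsilon_w)}'(0)$ directly via the strong Markov property of Brownian motion rather than by refreshing the hull).

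The one genuine gap is the boundary case $\alpha = 1$ of part (5). Your Borel--Cantelli argument gives an expected number of occupied boxes of order $\epsilon^{2\alpha - 2}$, which is $O(1)$ at $\alpha = 1$; this is inconclusive, and indeed first-moment methods cannot decide emptiness there. The paper handles $\alpha = 1$ by a separate argument: the imaginary part of a Brownian excursion is a Bessel($3$) process, a bridge height is necessarily a point of increase of that process, and a Bessel($3$) process almost surely has no points of increase (by time reversal it becomes Brownian motion up to a hitting time, and Brownian motion has none). You need some such argument, as the theorem asserts emptiness for all $\alpha \geq 1$.

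Two smaller points. For part (3) you invoke ``a zero-one argument based on scale invariance,'' but scale invariance of the law only says the distribution of $\haussdim D$ is scale invariant, which is automatic and gives no constancy. The paper instead shows that the dimension of the bridge set in a shrinking neighborhood of the origin is $\F_{0+}$-measurable and applies a Blumenthal $0$-$1$ law, which rests on the domain Markov property rather than scaling alone; you would need something of this flavor. For closedness in part (2), the condition ``$K\cap\{y=L\}$ has at least two points'' is not in fact stable under perturbation of $L$---the hull can pinch to a single point at an adjacent height---so the ``stability plus a measure-zero tangency set'' sketch needs replacing. The paper instead observes that if $L$ is a limit of bridge heights from below, the future hull past the corresponding time lies in $\{\Im z \geq L\}$, and failure of $L \in D$ would force the future hull to touch the line $y = L$ without crossing it, an event shown to have probability zero via the two-point estimate.
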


The proof of Theorem \ref{BridgeDimension} is taken up in Section \ref{BridgeSection}, but we will mention here that the key element is the \textbf{restriction formula:}
\begin{align}\label{restrictionFormula}
\probalph{K \cap A = \emptyset}{\alpha} = \phi_A'(0)^{\alpha},
\end{align}
where $A$ is a hull that does \textit{not} contain zero, and $\phi_A$ is a conformal map from $\H \backslash A$ to $\H$ such that $\phi_A(z) \sim z$ as $z \to \infty$. Most of the proof of Theorem \ref{BridgeDimension} is based on an analysis of $\phi_A'(0)$ for a specific choice of the hull $A$.
The proof of part \eqref{prop:emptyC} builds upon the $\alpha = 1$ case, which is related to Brownian excursions, and uses the fact that the vertical component of a Brownian excursion is a Bessel-$3$ process.

Given that bridge points exist for $\alpha < 1$, the next step is to prove an analogue of the renewal theory for half-plane SAWs. In Section \ref{RenewalSection} we show that the restriction hulls have an extended Markov property with respect to the information gained by observing the hull as it grows along with the \textit{global} bridge points of $K$ as they appear, and as a corollary we show that the bridge times are actually renewal times for the hull process. In Section \ref{LocalTimeSection} we will use this Markov property and Theorem \ref{BridgeDimension} to show the existence of a ``local time'' for the time spent by a restriction hull at its bridge points, and the local time can then be used to prove:

\begin{theorem}\label{LocalTimeIntroTheorem}
There exists a local time $\lambda$ supported on bridge heights such that $\theta_{\lambda}(K_{\lambda}\setminus K_{\lambda-})$ is a Poisson Point Process, where $\theta_t$ is an operator that shifts back to the origin the part of the hull that comes after time $t$. Moreover, the local time is the inverse of a stable subordinator of index $2-2\alpha$.
\end{theorem}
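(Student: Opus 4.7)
The plan is to mimic It\^o's excursion theory for the zero set of Brownian motion, using the set $D$ of bridge heights (equivalently, the set $G$ of bridge times) in place of the zero set. The main input beyond Theorem \ref{BridgeDimension} is the extended Markov property at bridge times, which is the business of Section \ref{RenewalSection}. That property says that if $t \in G$ is a bridge time, then the shifted hull $\theta_t(K \setminus K_{t-})$ is independent of the past and is distributed as a fresh $\palph$-hull. Iterating this at every bridge time upgrades to a regenerative property of $D \subset [0,\infty)$ in the sense of Maisonneuve: for any appropriate stopping time $T$ with $T \in D$, the set $(D - T) \cap [0,\infty)$ is independent of $D \cap [0,T]$ and equal in law to $D$.

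Combining regeneration with the scale invariance of $D$ (part \ref{prop:scaling} of Theorem \ref{BridgeDimension}), the classical structure theorem for scale-invariant regenerative subsets of $[0,\infty)$ identifies $D$, up to a multiplicative constant, as the closed range of a stable subordinator. Its index $\beta$ is pinned down by the Hausdorff dimension of its range, which for a stable subordinator almost surely equals $\beta$; by part \ref{prop:hdim} of Theorem \ref{BridgeDimension} this forces $\beta = 2 - 2\alpha$. The local time $\lambda$ is then defined, up to a normalization constant, as the right-continuous inverse of this subordinator. A concrete construction that matches the abstract object is to fix $\epsilon > 0$, count the bridge heights in $[0,h]$ whose consecutive gaps exceed $\epsilon$, normalize using the scaling exponent $2-2\alpha$, and take $\epsilon \downarrow 0$; uniqueness of the local time on a regenerative set guarantees the limit coincides with the abstract one.

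With $\lambda$ in hand, the Poisson point process statement is obtained by the standard It\^o argument. For each bridge time $t$, set $e_t := \theta_t(K_t \setminus K_{t-})$, the irreducible bridge starting at $t$ shifted back to the origin, and parameterize these excursions by the local time at which they occur. The stationary-independent-increments structure inherited from the inverse subordinator implies that, for $0 \le s_1 < s_2$, the family of excursions with local-time parameter in $(s_1, s_2]$ is independent of the family on $[0, s_1]$ and equal in law (modulo time shift) to the family on $[0, s_2 - s_1]$. A countable point process with this property on $[0,\infty) \times \Sigma$, where $\Sigma$ is the space of irreducible bridges, is necessarily Poisson; its intensity is of the form $ds \otimes \nu$ for a $\sigma$-finite characteristic measure $\nu$ whose push-forward under the bridge-height functional recovers the L\'evy measure of the subordinator.

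The main obstacle I anticipate is the passage from the extended Markov property at bridge times to the abstract regenerative structure of $D$. Because bridge times are defined non-locally, they are not stopping times for the natural hull filtration, and one must work with the enlarged filtration introduced in Section \ref{RenewalSection} that carries information about which current bridge points will survive in the future. One must also verify enough structural properties of $D$ to invoke Maisonneuve's theorem: that $D$ is closed and without isolated points (part \ref{prop:perfect} of Theorem \ref{BridgeDimension}), and that the subset of bridge times satisfying the Markov property is dense in $D$. Once these measure-theoretic points are settled, both the identification of the subordinator index and the derivation of the Poisson point process become essentially black-box applications of classical excursion theory.
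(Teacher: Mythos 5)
Your proposal follows essentially the same route as the paper: upgrade the extended Markov property of Section \ref{RenewalSection} to show that $D$ is regenerative, invoke the structure theorem for regenerative sets together with scale invariance and the Hausdorff-dimension computation to identify $D$ as the closed range of a stable subordinator of index $2-2\alpha$, take $\lambda$ to be its inverse, and run the standard It\^{o}-style excursion argument to obtain the Poisson point process. You also correctly flag the central technical point --- working in the enlarged filtration $\mathcal{G}_t$ and with $\mathcal{G}_t$-stopping times that almost surely land in the set of bridge times --- which is precisely how the paper (via the stopping time $\tau_L$ reaching the first bridge height $M_L$ above a fixed level) makes the regeneration rigorous.
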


The general theory of Poisson Point Processes then implies the existence of a sigma-finite measure $\nu_{\alpha}$ on continuum irreducible bridges that is the analogue of the measure $\textbf{P}$ on irreducible bridges for half-plane SAWs. In Section \ref{LocalTimeSection} we mention some basic properties of this measure. We also show that the Poisson Point Process can be used to recover the restriction hull, so that as in the discrete case, the irreducible bridges are the building blocks of the restriction hull processes.
\medbreak
We should mention that most of these ideas are similar in spirit to the excursion decomposition of a one-dimensional Brownian motion according to its zeros, as was first described by It\^{o}. In recent years, similar two-dimensional conformally invariant decompositions of this type have also been considered by Dub\'{e}dat \cite{dubedat:excursions} and Vir\'{a}g \cite{virag:beads}. They provide decompositions of unbounded hulls arising from certain variants of $\SLE(\kappa, \rho)$ and Brownian excursions, respectively, although their decompositions are at cutpoints rather than bridge points (i.e. points that, if removed from the set, would disconnect it into two pieces). Clearly bridge points are cutpoints but not vice versa, and there does not appear to be any direct relationship between our decomposition and theirs. In one sense their decompositions are more involved than ours, since their hulls refresh at cutpoints only after conformally mapping away the past, whereas our hulls refresh at bridge points after a simple shift of the future hull back to the origin. This difference is mostly cosmetic, however, and in spirit all these decompositions are quite similar.

The paper is organized as follows: in Section \ref{PrelimSection} we give the necessary background on restriction measures and introduce some notation. Section \ref{BridgeSection} is devoted to proving the existence of bridge points and Theorem \ref{BridgeDimension}, while Section \ref{RenewalSection} proves an extended Markov property and a refreshing property of the restriction hulls with respect to the filtration generated by bridge points as they appear. Section \ref{LocalTimeSection} then uses these results to prove the decomposition of Theorem \ref{LocalTimeIntroTheorem}. Finally, in Section \ref{Open} we present a series of open questions that were raised by our work.

\bigskip
\bigskip

\noindent \textbf{Acknowledgements:} We are grateful to Wendelin Werner for initially suggesting this problem to us, for many helpful and encouraging discussions along the way, and for hosting the first author at the \'{E}cole Normale Sup\'{e}rieure where most of this work was completed. We also thank Vladas Sidoravicius for hosting us at IMPA, where this work was begun, and B\'{a}lint Vir\'{a}g for some enlightening conversations. Finally, we thank an anonymous referee for some very helpful suggestions which greatly improved the presentation of this work.

\section{Restriction Measures \label{PrelimSection}}

In this section we review the basic construction and properties of restriction measures. We include no proofs but give references to the appropriate sources. For thorough overviews of the subject see \cite{lsw:conformal_restriction, lawler_werner:loop_soup, lawler:book}. The reader interested only in the bridge decomposition for $\SLE(8/3)$, and not for general restriction measures, can entirely ignore the presence of the loops in this section.

To begin with, consider a simply connected domain $D$ in the complex plane $\C$ (other than the whole plane itself) and two boundary points $z, w \in \partial D$. A \textit{chordal restriction measure} corresponding to the triple $(D, z, w)$ is a probability measure $\P^{(D, z, w)}$ on closed subsets of $\overline{D}$. The measures are supported on closed, connected subsets of $K \subset \overline{D}$ such that $K \cap \partial D = \{z, w \}$ and $D \backslash K$ has exactly two components (for the triple $(\H, 0, \infty)$ we call these sets unbounded hulls, for obvious reasons). The restriction measures satisfy the following properties, which essentially characterize them uniquely:
\begin{itemize}
\item \textbf{Restriction property:} for all simply connected subsets $D'$ of $D$ such that $D \backslash D'$ is also simply connected and bounded away from $z$ and $w$, the law of $\P^{(D, z, w)}$, conditioned on $K \subset D'$, is $\P^{(D', z, w)}$,
\item \textbf{Conformal invariance:} if $f: D \to D'$ is conformal and $K$ has $\P^{(D, z, w)}$ as its law, then $f(K)$ is distributed according to $\P^{(f(D), f(z), f(w))}$.
\end{itemize}
It turns out that for a given triple $(D, z, w)$ there is only a one-parameter family of such laws, indexed by a real number $\alpha$. We denote the law by $\palph^{(D, z, w)}$, and due to the conformal invariance property it is enough to define the restriction measure for a single triple $(D, z, w)$. The canonical choice is $(\H, 0, \infty)$, and for shorthand we will write $\palph$ for $\palph^{(\H, 0, \infty)}$. In \cite{lsw:conformal_restriction} it is shown that these restriction measures exist only if the parameter $\alpha$ satisfies $\alpha \geq 5/8$, and that the measure is supported on simple curves only if $\alpha = 5/8$. In the latter case the restriction measure is simply the $\SLE(8/3)$ law from $z$ to $w$ in $D$. For $\alpha = 1$ it turns out that the restriction measure coincides with the law of filled-in Brownian excursions in $D$ from $z$ to $w$.

For all $\alpha \geq 5/8$, one of the fundamental constructions of \cite{lsw:conformal_restriction} is that restriction measures can be realized by adding to an $\SLEk$ curve the filled-in loops that it intersects from an independent realization of the Brownian loop soup, for an appropriate choice of $\kappa$ for the curve and intensity parameter $\lambda$ for the loop soup. Let
\begin{align*}
\kappa = \frac{6}{2\alpha + 1}, \quad \lambda = (8-3\kappa)\alpha,
\end{align*}
and let $\gamma$ be a chordal $\SLEk$ and $\ls$ be an independent realization of the Brownian loop soup (in $\H$) with intensity parameter $\lambda$. The individual loops in $\ls$ will be generically denoted by $\eta$, they can be thought of as continuous curves $\eta : [0, t_{\eta}] \to \H$ such that $\eta(0) = \eta(t_{\eta})$. Throughout we will use $\gamma$ and $\eta$ to denote the curves as well as their traces, i.e. $\gamma[0, \infty)$ and $\eta[0, t_{\eta}]$, respectively. It will be clear from the context which we are referring to. Let $K$ be the hull generated by the union of $\gamma$ and all the (filled-in) $\eta \in \ls$ such that $\eta \cap \gamma \neq \emptyset$. Then \cite{lsw:conformal_restriction} (along with  \cite{lawler_werner:loop_soup}) proves that $K$ is distributed according to $\palph$.

This construction allows us to identify restriction hulls with pairs $(\gamma, \L)$, where $\gamma : [0, t_{\gamma}] \to \C$ is a continuous, simple curve and $\L$ is a set of loops. Furthermore, the curve plus loops structure gives a clean way of time parameterizing the hulls. Letting $K$ be a restriction hull, which we identify with $(\gamma, \L)$, we define $K_t$ to be the hull generated by $\gamma[0,t]$ plus the union of all filled-in loops $\eta \in \ls$ such that $\eta \cap \gamma[0,t] \neq \emptyset$. Then $(K_t)_{t \geq 0}$ is a growing family of hulls that increases to $K_{\infty} = K$. It is important for us to have such a time parameterization so that we may properly describe the renewal theory for the restriction hulls, but the particular time parameterization is not especially important since we are mostly interested in the restriction hull as a topological object. We remark that this growing family is not continuous with respect to the time parametrization, since loops are added ``all at once'', but again it does not really matter for our purposes (nevertheless, notice that the parameterization is right continuous). The only issue to point out is that the bridge points of a restriction hull will always be a subset of the underlying (simple) curve $\gamma$, and therefore to each bridge point there is a corresponding unique bridge time. Hence the set of bridge times $G$ is a well defined object.

The curve-plus-loops structure also makes it easy to define various operations on hulls. Given two pairs $(\gamma, \L)$ and $(\gamma^*, \L^*)$ with $\gamma(0) = \gamma^*(0) = 0$, their \textit{concatenation} is defined by
\begin{align*}
(\gamma, \L) \oplus (\gamma^*, \L^*) = \left(\gamma \oplus \gamma^*, \L \cup (\gamma(t_{\gamma})+\L^*)\right),
\end{align*}
where $\gamma \oplus \gamma^*$ is the usual concatenation of curves given by
\begin{align*}
\left( \gamma \oplus \gamma^* \right) (t) = \left\{
                             \begin{array}{ll}
                               \gamma(t), & 0 \leq t \leq t_{\gamma} \\
                               \gamma^*(t - t_{\gamma}) + \gamma(t_{\gamma}), & t_{\gamma} \leq t \leq t_{\gamma}+t_{\gamma^*}
                             \end{array}
                           \right.
\end{align*}
We also define a time shift for the hulls. For $t \leq s \leq t_{\gamma}$, define the curve $\gamma^{t,s}$ by $\gamma^{t,s}(t') := \gamma(t+t')$ for $0 \leq t' \leq s-t$, and let $$\L^{t,s} := \{ \eta \in \L : \eta \cap \gamma^{t,s} \neq \emptyset, \eta \cap \gamma[0,t]=\emptyset\}.$$ Then we define $\Lambda_{t,s} K := (\gamma^{t,s}, \L^{t,s})$, which is the future hull between times $t$ and $s$, and $\theta_{t,s} K := \Lambda_{t,s} K - \gamma(t)$, which shifts the future hull to start at the origin. If $s = t_{\gamma}$, which usually for us means $s = \infty$, we write $\Lambda_t$ and $\theta_t$ for these operators. In the case that $K$ is an unbounded hull in $\H$ and $t$ is a bridge time for $K$, it is easy to see that $\theta_t K$ is also an unbounded hull in $\H$. At non-bridge times $\theta_t K$ does not remain in $\H$.

Imagine a walker moving along the hull that has discovered $K_t$ at time $t$. The information that is progressively revealed to the walker is encapsulated by the filtration
\begin{align*}
\F_t := \sigma(K_s; 0 \leq s \leq t).
\end{align*}
With respect to this filtration, the following Domain Markov property is true:
\begin{align} \label{DomainMarkov}
\textrm{The conditional law of  } \Lambda_t K, \textrm{  given  } \F_t, \textrm{  is  } \palph^{(\H \backslash \gamma[0,t], \gamma(t), \infty)}.
\end{align}
This is similar to the Domain Markov property for regular SLE, where the future curve is an independent $\SLEk$ curve from $\gamma(t)$ to $\infty$ in $\H \backslash \gamma[0,t]$, except that in the case of restriction measures one also attaches to the curve the filled-in loops of an independent realization of the Brownian loop soup in the domain $\H \backslash \gamma[0,t]$. Note, however, that both the future curve and loops are sampled from the laws corresponding to the domains $\H \backslash \gamma[0,t]$, \textit{not} the laws corresponding to $\H \backslash K_t$. In short, the future curve and future loops are allowed to intersect the past loops but \textit{not} the past curve $\gamma[0,t]$.

For the domain $(\H, 0, \infty)$ recall that the restriction measures satisfy the restriction formula \eqref{restrictionFormula}:
\begin{align*}
\probalph{K \cap A = \emptyset}{\alpha} = \phi_{A}'(0)^{\alpha},
\end{align*}
where $A$ is a hull in $\H$ that is a positive distance from zero, and $\phi_A$ is a conformal map from $\H \backslash A$ onto $\H$ satisfying $\phi_A(z) \sim z$ as $z \to \infty$. In fact, specifying the above probabilities for a sufficiently large class of hulls $A$ (so-called \textit{smooth hulls}) uniquely determines $\palph$, see \cite{lsw:conformal_restriction} for a proof of this fact. For general triples $(D, z, w)$, the restriction formula is
\begin{align}\label{genRestrictionFormula}
\palph^{(D,z,w)} \left(K \cap A = \emptyset \right) = \phi_{f(A)}'(0)^{\alpha},
\end{align}
where $A$ is a hull in $D$ not containing $z$, and $f$ is a conformal map from $D$ onto $\H$ that sends $z$ to $0$ and $w$ to $\infty$.

The restriction formula will be heavily used throughout this paper. For a given hull $A$ there are various techniques from both complex analysis and probability theory that can be used to compute $\phi_A'(0)$. We will exclusively use probabilistic techniques involving Brownian motion; these are described in the next section.

\section{Bridge Lines and Bridge Points \label{BridgeSection}}

The main focus of this section is proving Theorem \r  ef{BridgeDimension}. Specifically, we establish the existence of bridge points and lines for restriction hulls with $\alpha < 1$, and also prove the non-existence for $\alpha \geq 1$.

First observe that part \eqref{prop:scaling} of Theorem \ref{BridgeDimension} is trivial. The scale invariance of $C$ and $D$ follows immediately from the scale invariance of the restriction hulls (which itself follows from the scale invariance of SLE and of the loop soup). To prove part \eqref{prop:perfect}, first recall that bridge points of a restriction hull are always on the SLE curve itself and never on a loop, and that there is always a unique bridge time corresponding to every bridge point. We refer to the end of the section for the proof.

The most involved proofs are for calculating the Hausdorff dimensions of $C$ and $D$. The computation of the Hausdorff dimensions in Theorem \ref{BridgeDimension} follows standard ``one-point'' and ``two-point'' arguments, as in, for example, \cite{alberts_sheff:dimension, beffara:curvedim, lawler:cutpoints, schramm_zhou:dimension}. The idea behind this argument is to approximate $C$ and $D$ by ``thickened'' sets $\ep{C}$ and $\ep{D}$, and then obtain estimates on the probability that a given set of points belongs to the thickened sets. A specific bound on the probability that one point belongs to the thickened set gives an upper bound on the Hausdorff dimension, and a similar bound on the probability that two points are in the thickened sets, together with the order of magnitude of the one-point estimate, gives a lower bound on the dimension. We recall the result that we will use in the remainder; throughout this paper we use the notation $f(\epsilon) \asymp g(\epsilon)$ to indicate that there exists constants $C_1$ and $C_2$ independent of $\epsilon$ such that $C_1 g(\epsilon) \leq f(\epsilon) \leq C_2 g(\epsilon)$, for all $\epsilon$ sufficiently small.

\begin{proposition} \label{HausdorffComputation}Let $H$ be a random subset of $\C$ and $\ep{H}$ be the set of points at distance less than $\epsilon$ from $H$. Suppose that the two following conditions are fulfilled for some $s \geq 0$ and constant $c > 0$:
\begin{itemize}
\item for all $z\in \mathbb{H}$, $\prob{z \in \ep{H}} \asymp \epsilon^{s}$,
\item for all distinct $w,z \in \H$, $\prob{w,z \in \ep{H}}\leq c\epsilon^{s} \wedge c(\epsilon^{2s}/\leb{w-z}^s)$.
\end{itemize}
Then $\haussdim H \leq 2-s$ with probability one, and with some strictly positive probability we also have $\haussdim H \geq 2-s$. If $H$ is a random subset of $\R$ then the same conclusion holds with $2-s$ replaced by $1-s$.
\end{proposition}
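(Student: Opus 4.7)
The plan is to combine two standard methods: a first-moment (covering) argument for the upper bound on $\haussdim H$, and a second-moment (energy) argument for the lower bound. I will carry out the proof for $H \subset \C$; the case $H \subset \R$ is identical after replacing two-dimensional with one-dimensional Lebesgue measure, which accounts for the $1-s$ instead of $2-s$.

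For the upper bound, the key observation is that Fubini combined with the one-point hypothesis gives $\expect{\leb{\ep{H} \cap D}} \leq C\epsilon^s$ for any bounded box $D$. A dyadic covering argument then yields $\expect{N_\epsilon(H \cap D)} \leq C\epsilon^{s-2}$, where $N_\epsilon$ is the minimal number of $\epsilon$-squares needed to cover $H \cap D$: indeed, each such square is contained in a slight enlargement of $\ep{H}$, so $\epsilon^2 N_\epsilon$ is dominated by $\leb{\ep{H} \cap D'}$ for a slightly inflated $D'$. Summing over $\epsilon_k = 2^{-k}$ and applying a first-moment/Markov argument, for any $t > 2-s$ the Hausdorff $t$-content of $H \cap D$ vanishes almost surely along this sequence, so $\haussdim(H \cap D) \leq 2-s$ almost surely; exhausting $\C$ by such boxes gives $\haussdim H \leq 2-s$ almost surely.

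For the lower bound, I will work with the random measure $\mu_\epsilon := \epsilon^{-s}\,\mathbf{1}_{\ep{H} \cap D}\,dz$ on a fixed box $D$ of positive Lebesgue measure. The one-point hypothesis yields $\expect{\mu_\epsilon(D)} \asymp \leb{D}$. The key computation is to show that, for any fixed $t < 2-s$, both $\expect{\mu_\epsilon(D)^2}$ and the expected $t$-energy
\begin{align*}
\expect{\Henergy_t(\mu_\epsilon)} = \epsilon^{-2s}\iint_{D \times D} \frac{\prob{w, z \in \ep{H}}}{\leb{w-z}^t} \, dw\, dz
\end{align*}
are bounded uniformly in $\epsilon$. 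Both bounds follow from splitting the double integral according to whether $\leb{w-z} \leq \epsilon$ or $\leb{w-z} > \epsilon$ and applying the corresponding half of the two-point hypothesis on each piece; the restriction $t < 2-s$ is exactly what keeps the two resulting integrals uniformly bounded. Combining Paley--Zygmund for $\mu_\epsilon(D)$ with Markov's inequality for $\Henergy_t(\mu_\epsilon)$ produces, with probability bounded below uniformly in $\epsilon$, an event on which $\mu_\epsilon(D) \geq \leb{D}/2$ while $\Henergy_t(\mu_\epsilon) \leq M$ for a deterministic $M$. Passing to a weak subsequential limit $\mu$ and using lower semicontinuity of the $t$-energy under weak convergence, one obtains on a positive-probability event a nontrivial positive measure supported on $\overline H \cap \overline D$ with $t$-energy at most $M$. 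Frostman's lemma then gives $\haussdim(\overline{H} \cap \overline{D}) \geq t$ on this event, and letting $t \uparrow 2-s$ completes the argument.

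The main obstacle is the passage to the weak subsequential limit $\mu$, and specifically verifying that its support lies inside $H$ itself (rather than a strictly larger set). For the applications in Theorem \ref{BridgeDimension} this is automatic since $C$ and $D$ are closed by part \eqref{prop:perfect}, so the proposition as stated is fit for purpose. The ``positive probability'' in the conclusion is the natural ceiling for an argument based solely on one- and two-point estimates; in Theorem \ref{BridgeDimension} the scale invariance of part \eqref{prop:scaling}, together with a zero--one argument captured in part \eqref{prop:constantdim}, will upgrade this to an almost sure equality for the sets $C$ and $D$.
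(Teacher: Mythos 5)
Your proof is correct and follows the standard first-moment (covering) / second-moment (energy plus Frostman) argument that the paper itself invokes without proof, citing \cite{alberts_sheff:dimension, beffara:curvedim, lawler:cutpoints, schramm_zhou:dimension}. The subtlety you flag about the support of the weak limit landing in $\overline{H}$ rather than $H$ is real but, as you note, harmless here since $C$ and $D$ are shown to be closed in part \eqref{prop:perfect} of Theorem \ref{BridgeDimension}.
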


Note that Proposition \ref{HausdorffComputation} by itself is not enough to conclude that the Hausdorff dimension of $H$ is a constant, since the lower bound only holds on some event of positive probability. In our situation we are able to conclude that the Hausdorff dimension of $C$ and $D$ is constant by using a $0$-$1$ law. The argument that follows uses the Blumenthal $0$-$1$ Law and is modified from \cite{lawler:cutpoints}.

\begin{proof}[\textbf{Proof of Theorem \ref{BridgeDimension}, part \eqref{prop:constantdim}}]
We will prove the result for $C$, a similar argument holds for $D$. For $0 \leq t \leq s$, define $C_t(s) := \{ \textrm{bridge points of } K_s \} \cap K_t$. For a fixed $d > 0$, let $W_t(s) := \{ \haussdim C_t(s) \geq d \}$. It is enough to show that $\probalph{W_{\infty}(\infty)}{\alpha} = 0$ or $1$.

First note that for fixed $s$, both the sets $C_t(s)$ and $W_t(s)$ are increasing in $t$, while for fixed $t$ they are decreasing in $s$. Defining
\begin{align*}
V_s := \bigcap_{n=1}^{\infty} W_{\frac{1}{n}}(s) = \left \{ \haussdim C_t(s) \geq d \,\,\, \forall \,\, 0 < t \leq s \right \},
\end{align*}
it follows that $V_s$ is also decreasing in $s$. For each element of the event $V_s \backslash V_{\infty}$, there exists a $t_0$ such that $0 < t_0 \leq s$ and for all $0 < t \leq t_0$,
\begin{align*}
\haussdim C_t(\infty) < d \leq \haussdim C_t(s).
\end{align*}
But this can only happen if for every $0 < t \leq t_0$, the future hull $\Lambda_s K$ destroys bridge points of $K_s$ that are in $K_t$, and since this happens for every $0 < t \leq t_0$ and $K_t \to \{ 0 \}$ as $t \to 0$, this forces that the future hull comes arbitrarily close to the real axis. But this is clearly an event of measure zero. Hence for every $s > 0$, $\probalph{V_s \backslash V_{\infty}}{\alpha} = 0$, from which it immediately follows that
\begin{align*}
\probalph{\bigcap_{n=1}^{\infty} V_{\frac{1}{n}} }{\alpha} = \probalph{V_{\infty}}{\alpha}.
\end{align*}
However, the intersection of the $V_{1/n}$ is $\F_{0+}$-measurable, and in the case of $\SLE(8/3)$ it follows that $\probalph{V_{\infty}}{5/8} = 0$ or $1$ by the Blumenthal $0$-$1$ Law, since the corresponding measure $\P_{5/8}$ is a pushforward of Wiener measure through the Loewner equation. For general $\alpha>5/8$, the same type of Blumenthal $0$-$1$ Law holds via the usual argument. Indeed, the Domain Markov property implies that $\phi_{K_t}(\Lambda_t K)$ is a restriction hull that is independent of $\F_t$, hence for $A \in \F_{0+}$ and $t > 0$ and any bounded, continuous function $f$ on hulls we have
\begin{align*}
\expect{f \left( \phi_{K_t}(\Lambda_t K) \right) \mathbf{1}_A} = \expect{f \left( \phi_{K_t}(\Lambda_t K) \right)} \probalph{A}{\alpha}
\end{align*}
Taking a limit of both sides as $t \downarrow 0$ and using the fact that $f$ is continuous and $\phi_{K_t}$ goes continuously to the identity we get that
\begin{align*}
\expect{f(K) \mathbf{1}_A} = \expect{f(K)} \probalph{A}{\alpha},
\end{align*}
which shows that $A$ is independent of all elements of $\F_{\infty}$, and therefore of itself.
\end{proof}

We now use Proposition \ref{HausdorffComputation} to prove part \eqref{prop:hdim} of Theorem \ref{BridgeDimension}. We use the following events to define our thickened sets.

\begin{definition}
For $z \in \H$ and $\epsilon > 0$, let $I(z, \epsilon)$ be the horizontal line $y = \Im \, z$ with the gap of width $2 \epsilon$ centered around $z$ removed. That is
\begin{align*}
I(z, \epsilon) := \left \{ w \in \H : \Im \, w = \Im \, z, \, \left| \Re(w-z) \right| \geq \epsilon \right \}.
\end{align*}
Define the sets $\ep{C}$ and $\ep{D}$ by
\begin{align*}
\ep{C} &:= \left \{ z \in \H : \Eze \right \}, \quad
\ep{D} := \left \{ L > 0 : \EE{n \epsilon + iL}{\epsilon} \textrm{ for some } n \in \Z \right \}.
\end{align*}
\end{definition}

\begin{figure}
\begin{center}
\includegraphics[width=15cm, height=13cm]{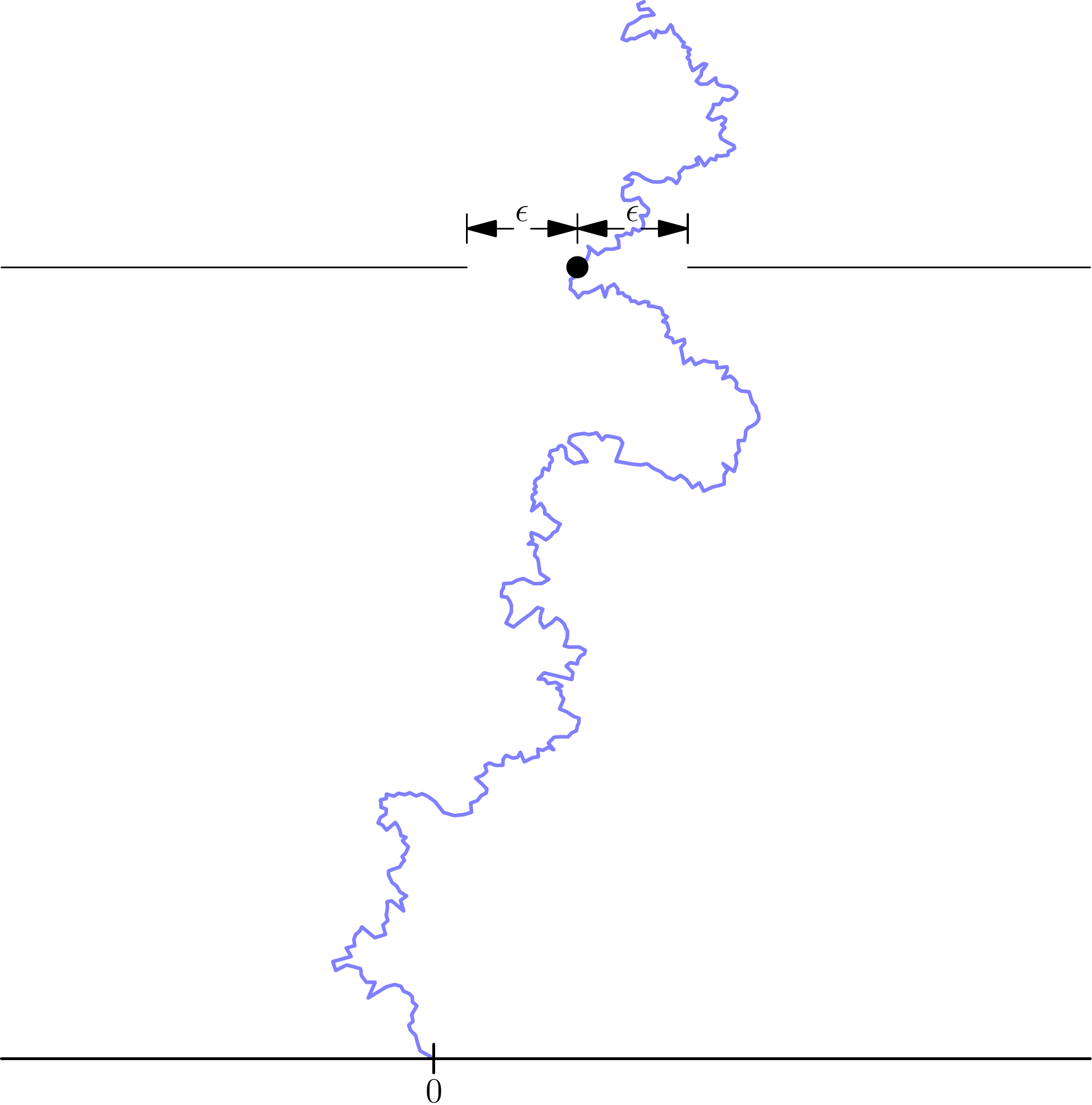}
\caption{The dotted point is $z$ and the two horizontal lines on either side form the set $I(z, \epsilon)$. This figure depicts the event that an SLE(8/3) avoids the hull $I(z, \epsilon)$.} \label{compute-fig}
\end{center}
\end{figure}

\begin{lemma}
With the definitions above, the following is true $\palph$-a.s.:
\begin{align*}
C = \bigcap_{\epsilon > 0} \ep{C}, \quad D = \bigcap_{\epsilon > 0} \ep{D}.
\end{align*}
\end{lemma}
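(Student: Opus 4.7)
The plan is a routine double inclusion for each of the two equalities, the only nontrivial ingredient being the deterministic topological fact that $K$ meets every horizontal line $\{y = L\}$ with $L > 0$. For $C = \bigcap_{\epsilon > 0} \ep{C}$, the inclusion $C \subseteq \bigcap_{\epsilon > 0} \ep{C}$ is immediate: if $z \in C$ then by definition $K$ meets the line $\{w \in \H : \Im w = \Im z\}$ exactly at $\{z\}$, and since $\bigcup_{\epsilon > 0} I(z, \epsilon) = \{w \in \H : \Im w = \Im z\} \setminus \{z\}$, the event $\Eze$ holds for every $\epsilon > 0$. For the reverse inclusion, if $z \in \bigcap_{\epsilon > 0} \ep{C}$ then intersecting the events $\Eze$ over $\epsilon > 0$ forces $K \cap \{w \in \H : \Im w = \Im z\} \subseteq \{z\}$; upgrading ``at most $\{z\}$'' to ``exactly $\{z\}$'' requires the topological input below.

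That input is that $K \cap \{y = L\} \neq \emptyset$ for every $L > 0$, and it is deterministic on the support of $\palph$. Since $K$ is closed, connected, contains $0$, and is unbounded in the sense of joining $0$ to $\infty$, the set $K \cup \{\infty\}$ is a compact connected subset of the one-point compactification of $\overline{\H}$ containing both $0$ and $\infty$; the line $\{y = L\}$ separates $0$ from $\infty$ there, so connectedness forces a nonempty intersection. Combined with the previous paragraph, this yields $K \cap \{y = \Im z\} = \{z\}$, and hence $z \in C$.

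The equality $D = \bigcap_{\epsilon > 0} \ep{D}$ follows in the same spirit. For the forward inclusion, if $L \in D$ has unique bridge point $x + iL$, one picks $n \in \Z$ with $|x - n\epsilon| < \epsilon$---always possible since consecutive integer multiples of $\epsilon$ are distance $\epsilon$ apart---whereupon $x + iL$ lies in the excised gap of $I(n\epsilon + iL, \epsilon)$, forcing $I(n\epsilon + iL, \epsilon) \cap K = \emptyset$. For the reverse, membership of $L$ in $\bigcap_{\epsilon > 0} \ep{D}$ means that for each $\epsilon > 0$ the slice $K \cap \{y = L\}$ is contained in a horizontal interval of length $2\epsilon$; sending $\epsilon \to 0$ forces the slice to have diameter zero, and combined with non-emptiness from the topological input makes it a singleton, giving $L \in D$.

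I do not expect any genuine obstacle here: the lemma is essentially a reformulation of the definitions of $C$ and $D$ in terms of the events $\Eze$ appearing in the restriction formula \eqref{restrictionFormula}. That is precisely its utility---the thickened sets $\ep{C}$ and $\ep{D}$ are the ones whose one- and two-point probabilities can be controlled via $\phi_A'(0)^\alpha$, exactly as required to apply Proposition \ref{HausdorffComputation} in the subsequent Hausdorff dimension computations.
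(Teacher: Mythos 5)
Your proof is correct and takes essentially the same approach as the paper: the forward inclusions are definitional, and the reverse inclusions reduce to the observation that $K$ meets every horizontal line $\{y = L\}$, $L > 0$, almost surely. You flesh out that non-emptiness via a one-point-compactification connectedness argument where the paper simply invokes connectedness of the hull and the fact that its vertical extent runs from $0$ to $\infty$; this is the same idea, stated more carefully.
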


\begin{proof}
Recall that $C$ consists of $z \in \H$ for which $K \cap \{ y = \Im \, z\} = \{ z \}$. Hence if $z \in C$ then $z \in \ep{C}$ for all $\epsilon > 0$. To prove the converse, note that if $z \in \ep{C}$ for every $\epsilon > 0$ then $z$ is the only possible element in the set $K \cap \{ y = \Im \, z \}$. But the latter set is always non-empty, since restriction hulls are connected and their vertical component goes from zero to infinity ($\palph$-a.s.), and therefore with $\palph$-probability $1$ the set $K \cap \{ y = L \}$ is non-empty for all $L > 0$. The proof for $D$ is exactly the same.
\end{proof}

The restriction formula makes it easy to compute the probability that a point $z \in \H$ is in $\ep{C}$. Indeed, by formula \eqref{restrictionFormula} we have
\begin{align*}
\probalph{z \in \ep{C}}{\alpha} = \probalph{ \Eze }{\alpha} = \phize'(0)^{\alpha},
\end{align*}
where $\phize$ is a conformal map from $\H \backslash I(z, \epsilon)$ onto $\H$ such that $\phize(w) \sim w$ as $w \to \infty$. Similarly,
\begin{align*}
\probalph{w,z \in \ep{C}}{\alpha} = \phi_{I(w,\epsilon) \cup I(z, \epsilon)}'(0)^{\alpha}.
\end{align*}
By Proposition \ref{HausdorffComputation}, the Hausdorff computation for $C$ and $D$ therefore comes down to an estimate of the derivative of these conformal maps at zero. We list three possible methods for these estimates. One deals only with conformal maps and is entirely analytic. The others use probabilitic techniques. We recall the analytic method but do not enter into details.

$ $\\
\textbf{Analytic Method:} While it is not possible to write down $\phize$ explicitly, one can write down the general form of its inverse. Let
\begin{align*}
\fze(w) := \lambda w + \frac{\Im \, z}{\pi} \left( \log(w-a) - \log(w-b) + \pi i \right),
\end{align*}
where the imaginary part of the logarithm is zero along the positive real axis and $\pi$ on the negative real axis. For appropriate choices of real constants $\lambda, a$, and $b$ (with $a < b$, $\lambda > 0$), $\fze$ maps $\H$ onto $\H \backslash I(z, \epsilon)$.  These constants implicitly depend on $z$ and $\epsilon$, although it is difficult to give closed-form expressions for them. Close analysis of the asymptotic behavior of $\lambda, a$, and $b$ could be used to get estimates on $\phize'(0)$ as $\epsilon \downarrow 0$, but we will mostly avoid this strategy. We will, however, mention that $a$ and $b$ are determined mostly by $z$, while $\lambda$ is proportional to $\epsilon^{-2}$.

$ $\\
\textbf{Brownian Excursion Method:} The first probabilistic method uses a well-known formula, due to B\'{a}lint Vir\'{a}g \cite{virag:beads}, for \textit{Brownian excursions} in the upper half plane. Recall that a Brownian excursion in $\mathbb{H}$ can be thought of as a Brownian motion that is started at zero and conditioned to have a positive imaginary part at all later times. Such excursions can be realized by a random path whose horizontal component is a one-dimensional Brownian motion and whose vertical component is an independent Bessel-$3$ process.

\begin{lemma}{(\cite{virag:beads})}\label{BELemma}
Let $A$ be a compact hull in the upper half plane such that $\H \backslash A$ is simply connected and $\dist(0, A) > 0$, and $\phi_A$ be a conformal map from $\H \backslash A$ into $\H$ such that $\phi_A(0) = 0$ and $\phi_A(z) \sim z$ as $z \to \infty$. If $BE$ denotes the path of a Brownian excursion in $\H$ from $0$ to $\infty$, then
\begin{align*}
\phi_A'(0) = \prob{BE \textrm{ does not intersect } A}.
\end{align*}
\end{lemma}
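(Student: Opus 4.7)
The plan is to realize the Brownian excursion $BE$ as a scaling limit of planar Brownian motions near $0$ conditioned not to hit $\R$, and then exploit the conformal invariance of planar Brownian motion together with the hydrodynamic normalization $\phi_A(z) = z + O(1/|z|)$ at $\infty$. Recall that the half-plane excursion from $0$ to $\infty$ arises as a Doob $h$-transform with the harmonic function $h(z) = \Im z$: starting a Brownian motion $B$ at $iy$ and conditioning on $\{\tau_R < \tau_\R\}$, where $\tau_R$ is the first time $\Im B_t = R$ and $\tau_\R$ is the hitting time of $\R$, the law of $B_{[0, \tau_R]}$ converges (as $y \downarrow 0$ and $R \uparrow \infty$) to the law of the excursion path stopped when it first reaches height $R$.

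Fix $R$ large enough that $A \subset \{\Im z < R/2\}$. Since $\Im B_t$ is a one-dimensional Brownian motion, gambler's ruin gives exactly $\P^{iy}(\tau_R < \tau_\R) = y/R$. Writing $\tau_A$ for the hitting time of $A$ and
\begin{equation*}
P_1(y,R) := \P^{iy}(\tau_R < \tau_A \wedge \tau_\R),
\end{equation*}
the previous paragraph reduces the lemma to proving
\begin{equation*}
\phi_A'(0) = \lim_{y \downarrow 0} \lim_{R \uparrow \infty} \frac{P_1(y,R)}{y/R}.
\end{equation*}
On the event $\{\tau_R < \tau_A \wedge \tau_\R\}$ the path $B_{[0, \tau_R]}$ stays in $\H \setminus A$, so by conformal invariance of planar Brownian motion, $\phi_A(B_t)$ is a time-changed Brownian motion in $\H$ started at $\phi_A(iy)$. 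Because $\phi_A$ sends the prime ends of both $\R$ and the $\H$-side boundary of $A$ to $\R$, the event in question translates to the image motion reaching the curve $\Gamma_R := \phi_A(\{\Im z = R\})$ before hitting $\R$. The hydrodynamic normalization gives $|\phi_A(z) - z| = O(1/R)$ on $\{\Im z = R\}$, so $\Gamma_R$ lies in a strip of vertical thickness $O(1/R)$ around $\{\Im w = R\}$ and approaches this line near its ends. Comparison with the explicit harmonic function $w \mapsto \Im w / R$, which vanishes on $\R$ and equals $1 + O(1/R^2)$ on $\Gamma_R$, then gives
\begin{equation*}
P_1(y,R) = \frac{\Im \phi_A(iy)}{R}\bigl(1 + o(1)\bigr) \qquad \text{as } R \to \infty.
\end{equation*}

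Dividing and taking $R \to \infty$ first yields $P_1(y,R)/(y/R) \to \Im \phi_A(iy)/y$, and since $\phi_A(0) = 0$ and $\phi_A$ maps a real neighborhood of $0$ into $\R$ (making $\phi_A'(0)$ a positive real number), $\Im \phi_A(iy)/y \to \phi_A'(0)$ as $y \downarrow 0$, yielding the lemma. The main obstacle is making the conformal invariance step quantitative: one must control the harmonic measure of the perturbed curve $\Gamma_R$, viewed from the point $\phi_A(iy)$, inside the subdomain of $\H$ bounded below by $\R$ and above by $\Gamma_R$. The outline above handles this via the explicit harmonic function $\Im w/R$, but some care is needed to argue that the discrepancy between $\Gamma_R$ and the exact horizontal line contributes only a multiplicative error of $1 + o(1)$, uniformly for $y$ in compact subsets of $(0, \infty)$; this can be done by a Beurling-type projection estimate, or by a direct expansion of the half-plane Poisson kernel applied to the perturbed boundary.
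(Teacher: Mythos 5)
The paper does not give its own proof of this lemma; it is cited directly from Vir\'ag \cite{virag:beads}, and your argument is correct and is, as far as I can tell, essentially the same $h$-transform-plus-conformal-invariance argument used there (realize the excursion as the $y\downarrow 0$, $R\uparrow\infty$ limit of BM from $iy$ conditioned on $\{\tau_R < \tau_\R\}$, push forward by $\phi_A$, and control the image $\Gamma_R$ of the line $\{\Im z = R\}$ by the expansion of $\phi_A$ at $\infty$). One small technical point worth flagging: the normalization $\phi_A(0)=0$ together with $\phi_A(z)\sim z$ at $\infty$ yields $\phi_A(z) = z + a_0 + O(1/|z|)$ with $a_0$ real but in general nonzero, so the claim $|\phi_A(z)-z| = O(1/R)$ on $\{\Im z = R\}$ is not literally correct. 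What you actually need, and what does hold because $a_0\in\R$, is the weaker estimate $|\Im\phi_A(z) - R| = O(1/R)$ on that line; this is exactly what puts $\Gamma_R$ in an $O(1/R)$-thin horizontal strip and feeds into your comparison with the harmonic function $\Im w / R$. With that correction, the maximum-principle comparison, together with the care you note for the unbounded strip-like domain, closes the argument.
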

In particular, this lemma shows that the filling in of a Brownian excursion has the law of a restriction measure with index $1$. It can also be used to get the estimates of Proposition \ref{HausdorffComputation}, but we prefer the following method that produces asymptotic results (even if they are not necessary in our setting).

$ $\\
\textbf{Brownian Motion Method:} Instead of using Brownian excursions to compute $\phi_A'(0)$, one can use Brownian motion directly. Oftentimes this is easier as it doesn't require dealing with the conditioning. In an appropriate sense, $\phi_A'(0)$ is the exit density at zero (with respect to Lebesgue measure) of a Brownian motion in $\H \backslash A$, starting from $\infty$. This is also called the \textit{excursion Poisson kernel} as seen from $\infty$. In what follows we let $B$ be a complex Brownian motion.

\begin{definition}
Given a simply connected domain $D$ with $z \in D$, $w \in \partial D$, let $H_D(z,w)$ denote the Poisson kernel. In the case $D = \H \backslash A$, we will often be interested in the ``Poisson kernel as seen from infinity'', for which we introduce the notation
\begin{align*}
H_{\H \backslash A}(\infty, w) := \lim_{L \uparrow \infty} L H_{\H \backslash A}(iL, w).
\end{align*}
\end{definition}
The following estimates will be useful when using Lemma \ref{BMLemma} to estimate $\phi_A'(0)$. For $x > 0$, $H_{\H}(z, x) = \frac{1}{\pi} \Im(z)/|z-x|^2$ and consequently $H_{\H}(\infty, x) = \frac{1}{\pi}$. Recall that under a conformal map $f : D \to D'$, $H_D(z,w)$ changes according to the scaling rule $H_D(z,w) = |f'(w)| H_{f(D)}(f(z), f(w)).$ In particular, we have the scaling rule $H_{\H \backslash A}(\infty, w) = H_{\H \backslash rA}(\infty, rw).$

\medbreak
The next lemma outlines how to use Brownian motion directly to estimate $\phi_A'(0)$. The method of proof is virtually identical to the one for Lemma \ref{BELemma}, so we refer the reader to \cite{virag:beads} for details.

\begin{lemma}\label{BMLemma}
For a complex Brownian motion and a compact hull $A$ in the upper half-plane such that $\H \backslash A$ is simply connected and $\dist(0,A) > 0$,
\begin{align*}
\phi_A'(0) = H_{\H \backslash A}(\infty, 0).
\end{align*}
\end{lemma}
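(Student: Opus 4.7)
The plan is to use the conformal covariance of the Poisson kernel, together with the hydrodynamic expansion of $\phi_A$ at infinity. First I would normalize: since $\phi_A'(0)$ is invariant under post-composing $\phi_A$ with a real translation, I may assume without loss of generality that $\phi_A(0) = 0$. The hypothesis $\dist(0, A) > 0$ places $0$ on an analytic arc of $\bd(\H \setminus A)$, so by Schwarz reflection $\phi_A$ extends conformally across a real neighbourhood of $0$, and $\phi_A'(0)$ is a well-defined positive real number.

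The core of the argument is the transformation rule for the Poisson kernel under a conformal map. Since $\phi_A : \H \setminus A \to \H$ is conformal with $\phi_A(0) = 0$, for every $z \in \H \setminus A$ one has
\[
H_{\H \setminus A}(z, 0) \;=\; \phi_A'(0) \, H_{\H}(\phi_A(z), 0).
\]
Specializing to $z = iL$ and multiplying both sides by $L$, I would then pass to the limit $L \to \infty$. The normalization $\phi_A(z) = z + o(1)$ as $z \to \infty$ (which is automatic for compact $A$) forces $\phi_A(iL)/L \to i$ and $|\phi_A(iL)|/L \to 1$; combined with the explicit expression $H_\H(w,0) = \tfrac{1}{\pi} \Im(w)/|w|^2$, this gives $L H_\H(\phi_A(iL),0) \to H_\H(\infty,0)$. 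Hence
\[
H_{\H \setminus A}(\infty, 0) \;=\; \lim_{L \to \infty} L H_{\H \setminus A}(iL, 0) \;=\; \phi_A'(0) \, H_\H(\infty, 0),
\]
which is the lemma up to the overall normalization constant chosen for the excursion Poisson kernel from infinity.

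Probabilistically, the same identity says that $H_{\H \setminus A}(\infty, 0)$ is (proportional to) the density at $0$ of the exit distribution of a Brownian motion started deep inside $\H \setminus A$, and $\phi_A'(0)$ is precisely the infinitesimal factor by which $\phi_A$ distorts Lebesgue length on the real axis at $0$; conformal invariance of Brownian motion reconciles the two. The only real subtlety, and the step to do carefully, is quantifying $\phi_A(iL) = iL + o(1)$ well enough to extract both $\Im \phi_A(iL) / L \to 1$ and $|\phi_A(iL)|^2 / L^2 \to 1$. Once this is in hand everything else is mechanical, which is why the authors rightly say the argument is essentially identical to Virág's proof of Lemma \ref{BELemma}, with the excursion conditioning replaced by the (simpler) direct description of the Brownian exit distribution at a boundary point.
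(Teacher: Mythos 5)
Your proof is correct and matches the approach the paper silently defers to (Vir\'ag's argument), just packaged cleanly via the conformal covariance rule for the Poisson kernel rather than via excursion-measure Radon--Nikodym derivatives; the two are the same probabilistic content. You were also right to flag the normalization: with the paper's own convention $H_{\H}(z,x) = \frac{1}{\pi}\Im(z)/|z-x|^2$, so $H_{\H}(\infty,0)=1/\pi$, your computation gives $H_{\H\setminus A}(\infty,0) = \phi_A'(0)\,H_\H(\infty,0) = \phi_A'(0)/\pi$, which disagrees with the lemma as stated by a factor of $\pi$ (the sanity check $A=\emptyset$ makes this immediate). This is a small inconsistency in the paper's conventions, not in your argument, and it is harmless downstream because the Hausdorff-dimension estimates only use $\phi_{I(z,\epsilon)}'(0)$ up to multiplicative constants; but you should state the discrepancy more bluntly than ``up to the overall normalization constant'' rather than leaving the impression the factor might secretly vanish. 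The only other thing I would tighten is the justification that $\Im\phi_A(iL)/L \to 1$ and $|\phi_A(iL)|/L \to 1$: since $A$ is compact, $\phi_A$ has a Laurent expansion $\phi_A(z) = z + a_0 + a_1 z^{-1} + \cdots$ near $\infty$, and that expansion is all you need, so a one-line appeal to it would close the ``subtlety'' you correctly identify.
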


The computation of $\phize'(0)$ is thus reduced to some estimates on the exit density of a Brownian motion in the domain $\H \backslash I(z, \epsilon)$. In order to simplify the computations, we first estimate exit densities for an intermediate set $\ep{S}$.

\begin{lemma}\label{BMStripLemma}
Let $\ep{S} = \R \times [0, 2i] \backslash I(i, \epsilon)$. Then for $x \in \R$ and $\lambda \in [-1,1]$,

\begin{align}\label{StripPoissonKernel}
H_{\ep{S}}(\lambda \epsilon + i, x) \sim \frac{\pi \sqrt{1 - \lambda^2}}{ 8 \cosh^2 ( \pi x/2) } \epsilon
\end{align}
as $\epsilon \downarrow 0$, where ``$\sim$" means that the ratio of the two terms converges to $1$ uniformly with respect to $x$ and $\lambda$. In particular, the probability that the Brownian motion started at $i$ exits $S_{\epsilon}$ on $\R$ is of order $\epsilon$.
\end{lemma}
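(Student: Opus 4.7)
The plan is to compute the Poisson kernel $H_{S_\epsilon}(\lambda\epsilon+i, x)$ by constructing an explicit conformal map $S_\epsilon \to \H$ and then using the transformation rule $H_D(z,w) = |f'(w)|\, H_{D'}(f(z), f(w))$ together with the standard formula $H_{\H}(z, y) = \pi^{-1}\Im(z)/|z-y|^2$. The map is built in three elementary stages.

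First, $f_1(z) = e^{\pi z}$ is a conformal bijection from the strip $\{0 < \Im(z) < 2\}$ onto $\C \setminus [0,\infty)$; since the slit $I(i,\epsilon)$ lies on the line $\Im(z)=1$, its image sits on the negative real axis, and $f_1$ takes $S_\epsilon$ onto $\C \setminus I_\epsilon^*$ where $I_\epsilon^* = (-\infty, -e^{\pi\epsilon}] \cup [-e^{-\pi\epsilon}, \infty)$. The starting point $\lambda\epsilon + i$ goes to $-e^{\pi\lambda\epsilon}$ in the gap $(-e^{\pi\epsilon}, -e^{-\pi\epsilon})$, and the boundary point $x$ goes to $e^{\pi x}$ on the upper side of the right ray. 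Second, the affine map $f_2(\xi) = (\xi + \cosh(\pi\epsilon))/\sinh(\pi\epsilon)$ normalizes the rays to $(-\infty, -1]$ and $[1, \infty)$. Third, $\Psi(\eta) = -\eta + i\sqrt{1-\eta^2}$ with the branch fixed by $\Psi(0)=i$ maps $\C \setminus ((-\infty,-1]\cup[1,\infty))$ conformally onto $\H$: the gap $(-1,1)$ maps to the upper unit semicircle, and the upper side of $[1, \infty)$ maps onto $(-1, 0)$ via $\eta_x \mapsto \sqrt{\eta_x^2-1}-\eta_x = -1/(\eta_x + \sqrt{\eta_x^2-1})$.

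Composing all three Jacobians gives
\begin{align*}
H_{S_\epsilon}(\lambda\epsilon + i, x) = \frac{\pi e^{\pi x}}{\sinh(\pi\epsilon)}\, |\Psi'(\eta_x)|\, H_{\H}(\Psi(\eta_0), \Psi(\eta_x)),
\end{align*}
with $\eta_0 = (\cosh(\pi\epsilon) - e^{\pi\lambda\epsilon})/\sinh(\pi\epsilon)$ and $\eta_x = (e^{\pi x} + \cosh(\pi\epsilon))/\sinh(\pi\epsilon)$. As $\epsilon \downarrow 0$, $\eta_0 \to -\lambda$ and so $\Psi(\eta_0) \to \lambda + i\sqrt{1-\lambda^2}$ on the upper unit semicircle, while $\eta_x \sim (e^{\pi x}+1)/(\pi\epsilon) \to \infty$, giving $\Psi(\eta_x) \to 0$ and $|\Psi'(\eta_x)| = |-1 + \eta_x/\sqrt{\eta_x^2-1}| \sim 1/(2\eta_x^2)$. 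Since $H_{\H}(\lambda + i\sqrt{1-\lambda^2}, 0) = \sqrt{1-\lambda^2}/\pi$, substitution yields $H_{S_\epsilon}(\lambda\epsilon+i, x) \sim \pi e^{\pi x}\sqrt{1-\lambda^2}\,\epsilon/(2(e^{\pi x}+1)^2)$, and the identity $(e^{\pi x}+1)^2 = 4 e^{\pi x}\cosh^2(\pi x/2)$ reduces this to \eqref{StripPoissonKernel}. The exit probability claim then follows by integration, since $\int_\R dx/\cosh^2(\pi x/2) = 4/\pi$.

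The main technical point will be verifying that the asymptotic holds uniformly in $x \in \R$ and $\lambda \in [-1,1]$. For uniformity in $x$, the key observation is that $\eta_x \geq 1/\sinh(\pi\epsilon) \to \infty$ for every $x$, which is enough for the expansions of $\Psi(\eta_x)$ and $\Psi'(\eta_x)$ to hold with uniform error, and the potentially troublesome factor $e^{\pi x}$ gets absorbed into the $\cosh^2(\pi x/2)$ denominator via the identity above. For $\lambda$ near $\pm 1$, $\Psi(\eta_0)$ approaches the real boundary of $\H$ and $\Im(\Psi(\eta_0)) = \sqrt{1-\eta_0^2}$ vanishes, but this is precisely matched by the vanishing $\sqrt{1-\lambda^2}$ factor on the right-hand side, so the ratio of the two sides remains bounded and converges to $1$.
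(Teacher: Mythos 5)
Your proposal is correct and takes essentially the same approach as the paper: both compute $H_{S_\epsilon}$ by constructing an explicit conformal map from the slit strip onto $\H$ and then applying the Poisson kernel transformation rule. The only real difference is the choice of uniformizing map — the paper writes down the single formula $f_\epsilon(z) = \bigl((e^{\pi z}+e^{\pi\epsilon})/(e^{\pi z}+e^{-\pi\epsilon})\bigr)^{1/2}$, a M\"obius map followed by a square root, while you build an equivalent map in three stages (exponential, affine normalization, inverse Joukowski $\Psi$); the two differ by a M\"obius automorphism of $\H$, which has no effect on the Poisson kernel. Your branch bookkeeping (that $e^{\pi x}$ lands on the top bank, so $\Psi(\eta_x)=-\eta_x+\sqrt{\eta_x^2-1}\in(-1,0)$ rather than on $(-\infty,-1]$) is correct, and the final asymptotic and the $\int_\R\cosh^{-2}(\pi x/2)\,dx = 4/\pi$ computation both check out. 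The uniformity discussion at the end is the one place that is slightly informal: near $\lambda=\pm1$ it would be cleanest to note the exact factorization $1-\eta_0^2 = (e^{\pi\lambda\epsilon}-e^{-\pi\epsilon})(e^{\pi\epsilon}-e^{\pi\lambda\epsilon})/\sinh^2(\pi\epsilon)$, from which $1-\eta_0^2 = (1-\lambda^2)(1+O(\epsilon))$ uniformly in $\lambda\in[-1,1]$ follows directly, making the matching of the vanishing $\sqrt{1-\lambda^2}$ factors rigorous rather than heuristic. That said, the paper's own proof is no more explicit on this point, so this is a refinement rather than a gap.
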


\begin{proof}
Let $\ep{z} = \lambda \epsilon + i$. In this case, it is easy to find an explicit conformal map from $\ep{S}$ onto $\H$. A simple one is given by
\begin{align*}
\ep{f}(z) = \left(\frac{e^{\pi z}+e^{\pi \epsilon}}{e^{\pi z}+e^{-\pi \epsilon}}\right)^{1/2}.
\end{align*}
By the scaling rule for the Poisson kernel
\begin{align*}
H_{\ep{S}}(\ep{z}, x) &= |\ep{f}'(x)| H_{\H}( \ep{f}(\ep{z}), \ep{f}(x)) = \frac{|\ep{f}'(x)|}{\pi} \frac{\Im(\ep{f}(\ep{z}))}{|\ep{f}(\ep{z}) - \ep{f}(x)|^2}.
\end{align*}
It is straightforward to verify that
\begin{align*}
\ep{f}(x) &\sim 1,
\end{align*}
as $\epsilon \downarrow 0$, and
\begin{align*}
\leb{\ep{f}'(x)} &= \frac{1}{2 f_{\epsilon}(x)} \frac{2\pi e^{\pi x} \sinh(\pi \epsilon)}{(e^{\pi x} + e^{-\pi \epsilon})^2} \\
&\sim \frac{\pi^2 \epsilon}{4 \cosh^2(\pi x/2)}
\end{align*}
Similarly
\begin{align*}
\ep{f}(\ep{z}) &= \left(\frac{e^{\pi \epsilon}-e^{\pi \lambda \epsilon}}{e^{-\pi \epsilon}-e^{\pi \lambda \epsilon}}\right)^{1/2}\\
& \sim \left(\frac{1-\lambda}{-1-\lambda}\right)^{1/2}\\
&= i \left(\frac{1-\lambda}{1+\lambda}\right)^{1/2}.
\end{align*}
Assembling the pieces proves \eqref{StripPoissonKernel}, and then integrating \eqref{StripPoissonKernel} over $x$ proves the last statement.
\end{proof}

\begin{lemma}\label{BMStripLemma2}
Let $x \in \R$ and $\lambda \in [-1, 1]$. Then
\begin{align*}
H_{\H \backslash I(i, \epsilon)}(\lambda \epsilon + i, x) \sim H_{S_{\epsilon}} (\lambda \epsilon + i, x)
\end{align*}
as $\epsilon \downarrow 0$.
\end{lemma}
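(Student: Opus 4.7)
The plan is to condition the Brownian motion from $\lambda\epsilon + i$ on whether it first hits the horizontal line $\{y=2\}$ (the only part of $\bd S_\epsilon$ not shared with $\bd(\H \setminus I(i,\epsilon))$) before reaching $\R$. Let $T_2$ and $T_\R$ denote the first hitting times of $\{y=2\}$ and $\R$, respectively, with the motion killed upon hitting $I(i,\epsilon)$. On the event $\{T_\R < T_2\}$ the trajectory lies entirely in $S_\epsilon$, contributing exactly $H_{S_\epsilon}(\lambda\epsilon + i, x)$ to the hitting density. On the complementary event, the strong Markov property at $T_2$ yields the complementary contribution $\int_\R q_\epsilon(u)\, H_{\H \setminus I(i,\epsilon)}(u + 2i, x)\, du$, where $q_\epsilon(u)$ is the density (at $u+2i$) of the exit position of $S_\epsilon$ through its top boundary. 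Hence
\begin{align*}
H_{\H \setminus I(i,\epsilon)}(\lambda\epsilon + i, x) = H_{S_\epsilon}(\lambda\epsilon + i, x) + \int_{\R} q_\epsilon(u)\, H_{\H \setminus I(i,\epsilon)}(u + 2i, x)\, du,
\end{align*}
and the task reduces to showing the correction integral is $o(\epsilon)$.

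For the factor $q_\epsilon(u)$, the domain $S_\epsilon$ is invariant under reflection across $\{y=1\}$, and this symmetry fixes the starting point $\lambda\epsilon + i$. Thus $q_\epsilon(u) = H_{S_\epsilon}(\lambda\epsilon + i, u)$, and Lemma \ref{BMStripLemma} immediately gives $q_\epsilon(u) = O(\epsilon/\cosh^2(\pi u/2))$, so in particular $\int q_\epsilon(u)\, du = O(\epsilon)$. For the factor $H_{\H \setminus I(i,\epsilon)}(u + 2i, x)$, apply the strong Markov property at the first hit of $\{y=1\}$ from above: the hitting position has Cauchy density $\frac{1}{\pi(1+(v-u)^2)}\, dv$, and the motion survives only if this hit lands in the gap $(-\epsilon, \epsilon)$. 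Conditioning again at that entry point $v+i$ and invoking the monotonicity bound $H_{\H \setminus I(i,\epsilon)}(v + i, x) \leq H_\H(v + i, x) = \frac{1}{\pi(1+(v-x)^2)}$, which follows because adding the absorbing obstacle can only diminish the density of exits on $\R$, we obtain
\begin{align*}
H_{\H \setminus I(i,\epsilon)}(u + 2i, x) \leq \int_{-\epsilon}^{\epsilon} \frac{dv}{\pi^2(1+(v-u)^2)(1+(v-x)^2)} = O\left(\frac{\epsilon}{(1+u^2)(1+x^2)}\right),
\end{align*}
uniformly in $u$.

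Combining the two bounds, the correction integral is $O(\epsilon^2)$ for each fixed $x$, whereas Lemma \ref{BMStripLemma} gives $H_{S_\epsilon}(\lambda\epsilon + i, x) \asymp \epsilon$ for fixed $x \in \R$ and $\lambda \in (-1,1)$. The ratio of the correction to $H_{S_\epsilon}$ is therefore $O(\epsilon)$ and tends to zero, which yields the claimed asymptotic equivalence. The main technical obstacle is the uniform-in-$u$ estimate on $H_{\H \setminus I(i,\epsilon)}(u + 2i, x)$: one has to contend with Brownian trajectories that oscillate back and forth across the obstacle before being killed or reaching $\R$. The single Cauchy step followed by the crude half-plane domination is enough to sidestep this delicacy, because it is already the single passage through the gap that introduces the factor of $\epsilon$.
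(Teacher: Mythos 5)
Your proof is correct and follows essentially the same approach as the paper's, which is a short heuristic: decompose the Brownian trajectory according to whether it hits $\{y=2\}$ before $\R$, argue that reaching $\{y=2\}$ costs $O(\epsilon)$ and passing back through the gap costs another $O(\epsilon)$, so the correction is $O(\epsilon^2)$ and hence negligible against $H_{S_\epsilon}\asymp\epsilon$. You have simply made that heuristic rigorous: the exact one-step decomposition via the strong Markov property at the first hit of $\{y=2\}$, the reflection symmetry of $S_\epsilon$ across $\{y=1\}$ to identify $q_\epsilon(u)=H_{S_\epsilon}(\lambda\epsilon+i,u)$ and invoke Lemma \ref{BMStripLemma}, and the Cauchy-kernel plus domain-monotonicity bound to control the second passage through the gap uniformly in $u$. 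These are exactly the missing details in the paper's argument, and your bookkeeping is sound.

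One small remark: your final Cauchy-kernel bound on $H_{\H\setminus I(i,\epsilon)}(u+2i,x)$ decays only polynomially in $x$, so it does not by itself recover the paper's parenthetical claim of uniformity in $x$ (the main term $H_{S_\epsilon}$ decays like $\cosh^{-2}(\pi x/2)$, i.e. exponentially). For the lemma as actually stated --- a pointwise asymptotic for each fixed $x$ and $\lambda\in(-1,1)$ --- and for its use in Proposition \ref{1PointBound}, where $x$ is fixed and only $\lambda$ is integrated, your estimate is entirely sufficient; but if one wanted the stronger uniform-in-$x$ statement one would need to sharpen the estimate on the trajectory from $u+2i$ to $x$ (for instance by conditioning on the pass through the gap and comparing to a strip Poisson kernel again, rather than dominating by $H_{\H}$).
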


\begin{proof}
If a Brownian motion started at $\lambda \epsilon + i$ exits $\ep{S}$ at $x$, then it also exits $\H \backslash I(i, \epsilon)$ at $x$. Consequently, the Poisson kernel on the left hand side is bigger than the one on the right. They are not the same because the Brownian motion in $\H \backslash I(i, \epsilon)$ can hit the line $y = 2i$ before hitting zero, which the Brownian motion in $\ep{S}$ is not allowed to do. Asymptotically this event contributes nothing; indeed there is only an $O(\epsilon)$ chance that the Brownian motion even makes it up to $y = 2i$, and then another $O(\epsilon)$ chance that it passes back through the gap. Overall this makes the event of order $\epsilon^2$ (uniformly in $x$ and $\lambda$), which, by Lemma \ref{BMStripLemma}, is negligible compared to $H_{S_{\epsilon}}(\lambda \epsilon + i, x)$.
\end{proof}

\begin{proposition}\label{1PointBound}
For $z = y(x+i) \in \H$,
\begin{align*}
\phize'(0) \sim U(z) \epsilon^2
\end{align*}
as $\epsilon \downarrow 0$, where
\begin{align*}
U(y(x+i)) = \frac{\pi}{16 y^2 \cosh^2(\pi x/2)}.
\end{align*}
\end{proposition}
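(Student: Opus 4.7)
The plan is to use Lemma \ref{BMLemma} to realize $\phize'(0)$ as the Poisson kernel from infinity $H_{\H \setminus I(z,\epsilon)}(\infty, 0)$, then reduce to a normalized picture via the scaling rule $H_{\H \setminus A}(\infty, w) = H_{\H \setminus rA}(\infty, rw)$ with $r = 1/y$, and compute this kernel by decomposing the Brownian path at the horizontal line $\{y = 1\}$ where the gap sits. After also translating horizontally, the task reduces to evaluating $H_{\H \setminus I(i, \tilde\epsilon)}(\infty, -x)$ asymptotically as $\tilde\epsilon := \epsilon/y \downarrow 0$; this is natural because Lemmas \ref{BMStripLemma} and \ref{BMStripLemma2} are already stated in precisely this normalization.

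The heart of the argument is a strong Markov decomposition at the first crossing of $\{y = 1\}$. A Brownian motion started at $iL$ cannot reach the real line unless it first enters the gap of radius $\tilde\epsilon$; any first hit on $I(i,\tilde\epsilon)$ is absorbed on the boundary and contributes nothing to the exit density at $-x$. Writing the decomposition, multiplying both sides by $L$, and using $L \cdot H_{\{y > 1\}}(iL, t+i) \to 1/\pi$ uniformly in bounded $t$ yields
\begin{align*}
H_{\H \setminus I(i,\tilde\epsilon)}(\infty, -x) = \frac{1}{\pi} \int_{-\tilde\epsilon}^{\tilde\epsilon} H_{\H \setminus I(i,\tilde\epsilon)}(t+i, -x) \, dt.
\end{align*}
Substituting $t = \mu \tilde\epsilon$, Lemma \ref{BMStripLemma2} replaces the integrand by the strip kernel up to a multiplicative factor $1 + o(1)$, and Lemma \ref{BMStripLemma} then supplies the sharp asymptotic $\tfrac{\pi \sqrt{1-\mu^2}}{8 \cosh^2(\pi x/2)} \tilde\epsilon$ (the sign of $-x$ is absorbed since $\cosh$ is even). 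Evaluating the ensuing integral via $\int_{-1}^{1} \sqrt{1 - \mu^2} \, d\mu = \pi/2$ and reinstating $\tilde\epsilon = \epsilon/y$ produces $\phize'(0) \sim \frac{\pi}{16 y^2 \cosh^2(\pi x/2)} \epsilon^2 = U(z) \epsilon^2$.

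The main technical point is legitimizing the passage of asymptotics under the integral sign in the final step; this amounts to invoking the uniformity in $\lambda$ stated explicitly in Lemma \ref{BMStripLemma} (and inherited by Lemma \ref{BMStripLemma2}), which is what makes the pointwise substitution of $\tfrac{\pi \sqrt{1-\mu^2}}{8 \cosh^2(\pi x/2)} \tilde\epsilon$ legitimate under $\int d\mu$. Beyond that there is no genuine obstacle: the decomposition at $\{y = 1\}$ is exact because $I(i,\tilde\epsilon)$ lies entirely on that line, and restriction of the integral to the gap is forced by absorption of Brownian motion on $I$. The only other item worth checking is that $U(z)$ depends continuously on $z$ (which it visibly does), ensuring that the asymptotic statement is meaningful uniformly for $z$ in compact subsets of $\H$—a point that will be used in the subsequent two-point estimate.
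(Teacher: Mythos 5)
Your proof is correct and is essentially the same as the paper's: both invoke Lemma \ref{BMLemma} to identify $\phize'(0)$ with the Poisson kernel from infinity, decompose the Brownian path by the strong Markov property at the first hit of the line carrying the gap, and then integrate the asymptotics of Lemmas \ref{BMStripLemma} and \ref{BMStripLemma2} to obtain the $\sqrt{1-\lambda^2}$ integral. The only cosmetic difference is that the paper first fixes $y=1$ and appeals to the scaling rule afterward, while you carry $\tilde\epsilon = \epsilon/y$ through the computation; the uniformity in $\lambda$ that you flag to justify integrating the asymptotics is indeed the point of the uniform statement in Lemma \ref{BMStripLemma}.
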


\begin{proof}
It suffices to prove the result in the case $z = x + i$, for the general form use the scaling rule. We use Brownian motion coming down from infinity as in Lemma \ref{BMLemma}. In order to reach $0$, the Brownian motion coming down from infinity must first pass through the gap of width $2 \epsilon$ centered at $z$, and then from the gap it must transition to zero while avoiding $I(z, \epsilon)$. The two events are independent by the Strong Markov property, and each one is $O(\epsilon)$. More precisely, by Lemmas \ref{BMStripLemma} and \ref{BMStripLemma2},
\begin{align*}
\phize'(0) &= H_{\H \backslash I(x+i, \epsilon)}(\infty, 0) \\
&= \int_{[-\epsilon,\epsilon]} H_{\H}(\infty, x+y)H_{\H \backslash I(x+i, \epsilon)}(x+y+i, 0) \, dy \\
&= \int_{-\epsilon}^{\epsilon} \frac{1}{\pi} H_{\H \backslash I(i, \epsilon)}(y+i, -x) \, dy \\
&= \frac{\epsilon}{\pi} \int_{-1}^1 H_{\H \backslash I(i, \epsilon)}(\lambda \epsilon + i, -x) \, d \lambda \\
&\sim \frac{\epsilon^2}{8 \cosh^2(\pi x/2)} \int_{-1}^1 \sqrt{1-\lambda^2} \, d \lambda .
\end{align*}
\end{proof}

From Proposition \ref{1PointBound} and the restriction formula, it is easy to derive the probability that a bridge point is within distance $\epsilon$ of a given point $z$ decays like $\epsilon^{2\alpha}$. From this the first part of Proposition \ref{HausdorffComputation} follows easily, but we need a last proposition in order to derive the two point estimate.

\begin{proposition}\label{2PointBound}
Let $z, w \in \H$, with $\Im(z) > \Im(w)$, and $\epsilon_z, \epsilon_w > 0$. Let $A = I(z, \epsilon_z) \cup I(w, \epsilon_w)$. Then
\begin{align*}
\phi_A'(0) \asymp U(z-w)U(w) \epsilon_z^2 \epsilon_w^2 ,
\end{align*}
as $\epsilon_z, \epsilon_w \downarrow 0$.
\end{proposition}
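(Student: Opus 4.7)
The plan is to imitate the proof of Proposition \ref{1PointBound}, using Lemma \ref{BMLemma} to write $\phi_A'(0) = H_{\H \backslash A}(\infty, 0)$ and then applying the strong Markov property of Brownian motion at the successive first hitting times of the two horizontal lines $y = \Im z$ and $y = \Im w$ carrying the slits. Any Brownian path from $\infty$ that reaches $0$ while avoiding $A$ must first cross $y = \Im z$ through the gap $G_z := \{\Re z + x + i \Im z : |x| < \epsilon_z\}$ and then cross $y = \Im w$ through the gap $G_w$, and between these two crossings it must still avoid the upper slit $I(z, \epsilon_z)$. Since the Poisson kernel as seen from $\infty$ in the half-plane $\{y > \Im z\}$ equals $1/\pi$ everywhere, a first application of strong Markov gives
\begin{align*}
H_{\H \backslash A}(\infty, 0) = \frac{1}{\pi} \int_{G_z} H_{\H \backslash A}(\zeta_1, 0) \, d\zeta_1,
\end{align*}
and a second application at the first hitting time of $y = \Im w$ yields
\begin{align*}
H_{\H \backslash A}(\zeta_1, 0) = \int_{G_w} H_{D_z^+}(\zeta_1, \zeta_2) \, H_{\H \backslash A}(\zeta_2, 0) \, d\zeta_2,
\end{align*}
where $D_z^+ := \{y > \Im w\} \backslash I(z, \epsilon_z)$.

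To estimate the intermediate kernel $H_{D_z^+}(\zeta_1, \zeta_2)$, I would translate by $-i \Im w$ and rescale by $1/\Im(z-w)$, which sends $D_z^+$ to $\H \backslash I(z_*, \epsilon_*)$ for a rescaled slit centre $z_*$ at height $1$ and rescaled gap half-width $\epsilon_* = \epsilon_z / \Im(z-w)$. Applying Lemma \ref{BMStripLemma2} together with horizontal translation invariance and the conformal scaling rule for the Poisson kernel, one finds
\begin{align*}
H_{D_z^+}(\zeta_1, \zeta_2) \sim 2 \, U(z-w) \sqrt{1 - \lambda_1^2} \, \epsilon_z \qquad \text{as } \epsilon_z, \epsilon_w \downarrow 0,
\end{align*}
uniformly in $\lambda_1, \lambda_2 \in [-1, 1]$, where $\zeta_1 = \Re z + \lambda_1 \epsilon_z + i \Im z$ and $\zeta_2 = \Re w + \lambda_2 \epsilon_w + i \Im w$. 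An identical calculation applied directly to $H_{\H \backslash I(w, \epsilon_w)}(\zeta_2, 0)$ produces
\begin{align*}
H_{\H \backslash I(w, \epsilon_w)}(\zeta_2, 0) \sim 2 \, U(w) \sqrt{1 - \lambda_2^2} \, \epsilon_w \qquad \text{as } \epsilon_w \downarrow 0.
\end{align*}

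To close the argument one has to compare $H_{\H \backslash A}(\zeta_2, 0)$ with $H_{\H \backslash I(w, \epsilon_w)}(\zeta_2, 0)$. The upper bound $H_{\H \backslash A} \leq H_{\H \backslash I(w, \epsilon_w)}$ is immediate since $\H \backslash A \subset \H \backslash I(w, \epsilon_w)$, while the matching lower bound requires showing that a definite positive fraction of the Brownian paths from $\zeta_2$ to $0$ avoiding the lower slit also avoid the upper slit $I(z, \epsilon_z)$; this is the main technical obstacle, but it should follow from the fact that such paths rarely reach height $\Im z$, combined with standard half-plane exit estimates. Granting the relation $H_{\H \backslash A}(\zeta_2, 0) \asymp H_{\H \backslash I(w, \epsilon_w)}(\zeta_2, 0)$, substituting the two asymptotics into the double integral and using $d\zeta_1 = \epsilon_z \, d\lambda_1$, $d\zeta_2 = \epsilon_w \, d\lambda_2$ together with $\int_{-1}^1 \sqrt{1-\lambda^2} \, d\lambda = \pi/2$ leads to
\begin{align*}
H_{\H \backslash A}(\infty, 0) \asymp \frac{1}{\pi} \cdot 4 \, U(z-w) \, U(w) \, \epsilon_z^2 \epsilon_w^2 \left( \frac{\pi}{2} \right)^2 = \pi \, U(z-w) \, U(w) \, \epsilon_z^2 \epsilon_w^2,
\end{align*}
which is the claimed estimate.
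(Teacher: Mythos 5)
Your approach is essentially the same as the paper's: use Lemma \ref{BMLemma} to write $\phi_A'(0)$ as a Poisson kernel from $\infty$, then apply the strong Markov property at the two horizontal lines and estimate the intermediate kernels by reducing to the single-slit estimates. The paper's own proof is considerably terser and simply asserts that the ``two probabilities multiply,'' whereas you carry out the double-integral decomposition explicitly; in doing so you have in fact isolated the one place the paper is silent, namely the comparison $H_{\H\backslash A}(\zeta_2,0) \asymp H_{\H\backslash I(w,\epsilon_w)}(\zeta_2,0)$. One small correction to your explanation of that step: the paths from $\zeta_2$ to $0$ avoiding $I(w,\epsilon_w)$ do \emph{not} reach height $\Im z$ with vanishing probability --- the conditional probability of reaching that height stays bounded away from $0$ and from $1$ as $\epsilon_z,\epsilon_w\downarrow 0$, which is precisely why one gets an $\asymp$ rather than a $\sim$ in that comparison; what you actually need (and what follows from standard half-plane estimates) is only that a definite positive fraction of those paths never reach height $\Im z$, not that almost all of them fail to. With that rephrasing the argument is sound and matches the paper's strategy, giving the stated $\asymp$ bound.
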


\begin{proof}
The argument is virtually the same as for the one-point estimate in Proposition \ref{1PointBound}, the only difference being that the Brownian motion, after passing through the first gap at $z$ then has to pass through a second gap at $w$. The probability of the latter event can be estimated using Proposition \ref{1PointBound}; indeed, after temporarily shifting $w$ to zero, there is a $U(z-w) \epsilon_z^2 \epsilon_w$ chance that the Brownian motion hits in an $\epsilon_w$ neighbourhood of $w$ (and therefore also the second gap). With some positive probability it hits in the middle of the second gap, where the probability of moving to zero is, up to a constant, given by $U(w) \epsilon_w$. These two probabilities multiply since, by the Strong Markov property, the path before the second gap is independent of the path after the second gap.
\end{proof}

\begin{remark}
By carefully decomposing the path according to the points it passes through in the gaps and then integrating, the statement of Proposition \ref{2PointBound} could be strengthened to an asymptotic result rather than just up to constants. For our purposes, however, this is not required.
\end{remark}

\begin{proof}[\textbf{Proof of Theorem \ref{BridgeDimension}, part \eqref{prop:hdim}}]
Propositions \ref{1PointBound} and \ref{2PointBound} combine with Proposition \ref{HausdorffComputation} to prove the result for $C$.

For $D$, the key observation is that if two gaps on a horizontal line do not overlap, then the curve can only avoid the line by going through one of them. Consequently, for $n \neq m$, the events $\EE{n\epsilon+iL}{\epsilon/2}$ and $\EE{m\epsilon+iL}{\epsilon/2}$ are disjoint, and therefore
\begin{align*}
\probalph{L \in \ep{D}}{\alpha} &= \probalph{\bigcup_{n \in \Z} \left \{ \EE{n\epsilon + iL}{\epsilon/2} \right \} }{\alpha} \\
& = \sum_{n \in \Z} \probalph{ \EE{n\epsilon+iL}{\epsilon/2} }{\alpha} \\
& \sim \frac{1}{L^{2\alpha}}  \sum_{n \in \Z} U \left( \frac{n \epsilon}{L} + i \right)^{\alpha} \left( \frac{\epsilon^{2\alpha}}{4^{\alpha}} \right)\\
& \sim \frac{\epsilon^{2\alpha-1}}{4^{\alpha} L^{2\alpha-1}} \int_{\R} U(x + iL)^{\alpha} \, dx \\
& \sim \frac{\pi^{\alpha} \epsilon^{2 \alpha - 1}}{32^{\alpha} L^{2 \alpha - 1}} \int_{\R} \cosh^{-2\alpha} \left( \pi x/2 \right) \, dx.
\end{align*}
The transition from sum to integral is a Riemann sum approximation. By $2 \alpha > 1$, the integral is a finite constant depending only on $\alpha$. This gives the one-point estimate for $D$.

Similarly, for $0 < L < L'$,
\begin{align*}
\probalph{L, L' \in \ep{D}}{\alpha} &= \probalph{\bigcup_{m,n \in \Z} \left \{ n\epsilon + iL, m\epsilon + iL' \in C_{\epsilon/2} \right \} }{\alpha} \\
&= \sum_{m,n \in \Z} \probalph{n \epsilon + iL, m \epsilon + iL' \in C_{\epsilon/2}}{\alpha} \\
&\asymp \sum_{m,n \in \Z} \epsilon^{4 \alpha} U \left[ (m-n)\epsilon + i(L'- L) \right]^{\alpha} U(n\epsilon + iL)^{\alpha} \\
&\asymp \epsilon^{4 \alpha - 2} \int_{\R} U(x + i(L'-L))^{\alpha} \, dx \int_{\R} U(x + iL)^{\alpha} \, dx \\
&\asymp \frac{\epsilon^{4 \alpha - 2}}{L^{2 \alpha - 1} (L'- L)^{2 \alpha - 1}}
\end{align*}
We use the same transition from sum to integral as in the one-point bound. Proposition \ref{HausdorffComputation} now completes the proof.
\end{proof}

We show that $C$ and $D$ are almost surely empty for $\alpha \geq 1$. For $\alpha<1$, the Haussdorff dimension is strictly positive and the set is non empty.

\begin{proof}[\textbf{Proof of Theorem \ref{BridgeDimension}, part \eqref{prop:emptyC}}]  For $\alpha=1$, recall that the imaginary part of a Brownian excursion is a Bessel(3) process, and a bridge height for the hull necessarily corresponds to a point of increase for the Bessel(3) process. However, it is well known that Bessel(3) has no point of increase since, for example, a Bessel(3) process reversed from its last passage time of a level has the same law as a Brownian motion up to its first hitting time of zero, and Brownian motion is known to have no points of increase (see \cite{revuz_yor} for details of both facts). 

For $\alpha > 1$ consider the rectangle $R = [-1,1] \times [1/2, 1]$. Cover it with $2^{2n}$ squares each of side length $2^{-n}$, and let $\{S_i\}_{1 \leq i \leq 2^{2n}}$ be the boxes and $z_i$ be their centers. Then, by Proposition \ref{1PointBound}, the expected number of squares containing a bridge point decays exponentially fast since
\begin{align*}
\mathbb{E}_{\alpha} \left[ \sum_{i=1}^{2^{2n}} \indicate{C \cap S_i \neq \emptyset} \right] & \asymp \sum_{i=1}^{2^{2n}} \probalph{I(z_i, 2^{-n}) \cap K \neq \emptyset}{\alpha} \\
& \asymp \sum_{i=1}^{2^{2n}} U(z_i) (2^{-n})^{2\alpha} \\
&= 2^{(2-2\alpha)n} 2^{-2n} \sum_{i=1}^{2^{2n}} U(z_i) \\
& \leq C 2^{(2-2\alpha)n},
\end{align*}
for some constant $C > 0$. The last inequality is a simple consequence of the fact that $U$ is Riemann integrable and hence
\begin{align*}
2^{-2n} \sum_{i=1}^{2^{2n}} U(z_i) \to \int_{R} U(z) \, dA(z) < \infty,
\end{align*}
where $dA(z)$ is two-dimensional Lebesgue measure. The Borel-Cantelli lemma then proves that $R$ almost surely contains no bridge points. By scale invariance any scaled version of $R$ also contains no bridge points. Translates of $R$ in the horizontal direction also contain no bridge points, since clearly the expected number of bridge points in translates of $R$ decreases as the rectangle is moved away from the imaginary axis. Finally, since the entire half-plane can be covered with countably many scaled and translated versions of $R$, the entire plane must almost surely be free of bridge points.
\end{proof} 

We end this section with the proof of part \eqref{prop:perfect} of Theorem \ref{BridgeDimension}. The lack of isolated points in $C$ and $D$ is also a consequence of the renewal property of restriction hulls at bridge points, so we defer the proof of this fact until the end of Section \ref{RenewalSection}.

\begin{proof}[\textbf{Proof of Theorem \ref{BridgeDimension}, part \eqref{prop:perfect}}] We prove the result for $D$; the proof for $C$ is similar. To prove that $D$ is closed, suppose that $L$ is a limit point of $D$. Without loss of generality we may assume that the limiting sequence of bridge heights $L_n$ that converges to $L$ is strictly increasing. If $t$ is the bridge time corresponding to $L$, then the restriction hull after time $t$ must reside in the domain ${\Im \, z \geq L}$ (since each $L_n$ is a bridge height). Then $L$ is not in $D$ if and only if the future hull touches the line $\Im \, z = L$ but does not cross it, which is clearly an event of probability zero. Indeed, for two points $z$ and $w$ on the same horizontal line let us define $A(z, \epsilon_z, w, \epsilon_w)$ to be the event that the hull goes through the balls $B(z,\epsilon_z)$ and $B(w,\epsilon_w)$ while avoiding $I(z,\epsilon_z)\cap I(w,\epsilon_w)$. The estimates of Proposition \ref{2PointBound} can be used to show that the probability of $A(z,\epsilon_z,w,\epsilon_w)$ is of order $\epsilon_z^{2\alpha} \epsilon_w^{2\alpha}$, which easily implies the result since $\alpha > 1/2$.
\end{proof}

\section{Renewal at Bridge Lines \label{RenewalSection}}

In this section we show that the restriction hulls renew themselves at bridge heights. Most of the section is technical, so first we would like to give the intuition behind the renewal property. It is almost entirely a consequence of restriction. Suppose that $K$ is a restriction hull with the law $\palph$. Given $\F_t$, the Domain Markov property \eqref{DomainMarkov} says that the future hull has the restriction law corresponding to the domain $(\H \backslash \gamma[0,t], \gamma(t), \infty)$. But if we also know that $t$ is a bridge time, then the future hull is separated from the past by the bridge line that the hull is currently at. The future hull is therefore conditioned not to go below this bridge line, and this conditioning is, by the restriction property, ``equivalent'' to sampling the future hull from the restriction measure corresponding to the half plane above the bridge line. Shifting the bridge point back to the origin, this means that the shifted future hull $\theta_t K$ also obeys the law $\palph$ and is independent of $\F_t$.

There are two main technical obstacles to this intuition. The first is that the event that $t$ is a bridge time for $K$ is not measurable with respect to $\F_t$, since the set of bridge times is a function of the entire hull. To address this problem and still have a meaningful notion of renewal, we simply expand our filtration to a larger one $\G_t$ that tells us which bridge heights of $K_t$ are also bridge heights of $K$. The second and more problematic technicality is that $t$ being a bridge time is an event of measure zero, and so conditioning on it requires some care. Theorem \ref{DecompMarkov} deals with this latter problem by showing that the restriction hulls obey a certain Domain Markov property with respect to $\G_t$, and from this concludes that they refresh themselves at $\G_t$-stopping times $\tau$ such that $\probalph{\tau \in G}{\alpha} = 1$ (recall that $G$ is the set of bridge times).

We make the following definitions:

\begin{definition}
For $t \geq 0$, let $D_t$ be the set of bridge heights of $K_t$. Note that $D_t$ is $\F_t$-measurable and $D_{\infty} = D$. Observe that $D_t \cap D$ is the set of bridge heights of $K_t$ that are also bridge heights of $K$, and $D_t \backslash D$ is the set of bridge heights of $K_t$ that are \textit{not} bridge heights of $K$. We also define
\begin{align*}
L_t := \sup D_t \cap D, \quad L_t' := \inf D_t \backslash D.
\end{align*}
Note that neither of these quantities, nor $D_t \cap D$ or $D_t \backslash D$, are $\F_t$-measurable. However, they are measurable with respect to the enlarged filtration
\begin{align*}
\G_t := \sigma \left(K_s, D_s \cap D; 0 \leq s \leq t \right).
\end{align*}
Clearly $\F_t \subset \G_t$, and in this larger filtration the bridge lines (and points, and times) of $K$ that belong to $K_t$ are measurable objects.
\end{definition}

Notice that $D_t \cap D$ is almost surely closed, and therefore $L_t$ is actually a maximum rather than a supremum (i.e. $L_t \in D_t \cap D$). Hence $L_t$ is the largest bridge height of $K_t$ that is also a bridge height of $K$. Clearly $L_t \leq L_t'$. The next result follows easily from these definitions.

\begin{proposition}\label{AlgebraEquality}
The $\sigma$-algebra $\Gt$ is generated by $K_t$ and $L_t$, i.e.
\begin{align*}
\G_t = \sigma \left(\F_t, L_t \right).
\end{align*}
\end{proposition}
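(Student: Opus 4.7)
The plan is to establish the two inclusions comprising $\G_t = \sigma(\F_t, L_t)$. The inclusion $\sigma(\F_t, L_t) \subseteq \G_t$ is immediate: $\F_t \subseteq \G_t$ by construction of $\G_t$, and $L_t = \sup(D_t \cap D)$ is a deterministic function of the $\G_t$-measurable object $D_t \cap D$. The substantive direction is the reverse inclusion, for which it suffices to prove that each generator $D_s \cap D$, $s \leq t$, is $\sigma(\F_t, L_t)$-measurable (the generator $K_s$ is automatically $\F_t$-measurable).

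The key geometric observation I will exploit is that $K_t$ and $K$ agree below the line $y = L_t$. Let $t_{L_t} \leq t$ denote the unique bridge time corresponding to $L_t$. After time $t_{L_t}$ the hull cannot reenter $\{y < L_t\}$, for otherwise the line $y = L_t$ would be intersected by $K$ at a second point, contradicting $L_t \in D$. Consequently
\begin{equation*}
K \cap \{y \leq L_t\} \;=\; K_{t_{L_t}} \cap \{y \leq L_t\} \;\subseteq\; K_t \cap \{y \leq L_t\} \;\subseteq\; K \cap \{y \leq L_t\},
\end{equation*}
so $K_t$ and $K$ induce identical traces on every horizontal line of height at most $L_t$. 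Since restriction hulls are connected subsets of $\overline{\H}$ running from $0$ to $\infty$, they cross every such line, so for $L \leq L_t$ the condition $L \in D$ reduces to $K_t \cap \{y = L\}$ being a singleton, i.e.\ to $L \in D_t$. This yields the identity $D \cap [0, L_t] = D_t \cap [0, L_t]$.

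To handle general $s \leq t$, observe that bridge heights of $K_s$ which lie in $D$ automatically belong to $D_t$ as well (the unique intersection point of $K$ with $\{y = L\}$ lies in $K_s \subseteq K_t$), hence $D_s \cap D \subseteq D_t \cap D \subseteq [0, L_t]$. Combining this with the previous identity yields
\begin{equation*}
D_s \cap D \;=\; D_s \cap \bigl(D \cap [0, L_t]\bigr) \;=\; D_s \cap D_t \cap [0, L_t],
\end{equation*}
which is manifestly $\sigma(\F_t, L_t)$-measurable since $D_s, D_t$ are $\F_t$-measurable and $L_t$ determines the truncation interval $[0, L_t]$. This completes the non-trivial inclusion.

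The main delicate point I anticipate is the rigorous justification that $K$ cannot descend back into $\{y < L_t\}$ after its unique visit to the line $y = L_t$. Since $K$ consists of the SLE curve $\gamma$ together with the attached loops from the Brownian loop soup, one must verify that neither the post-$t_{L_t}$ portion of $\gamma$ nor any loop touched after time $t_{L_t}$ can drop below $L_t$, as any such excursion would produce a second intersection of $K$ with the bridge line. This is essentially topological, following from the definition of bridge height together with connectedness of $K$ and of each individual filled-in loop, but it is the one step that genuinely merits care.
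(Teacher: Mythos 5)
Your proof is correct and follows essentially the same route as the paper: the easy inclusion comes from $L_t$ being a function of $D_t \cap D$, and the substantive inclusion reduces to the identity $D_t \cap D = \{L \in D_t : L \leq L_t\}$, which the paper asserts as ``clear'' and which you justify by showing $K$ and $K_t$ coincide in $\{y \leq L_t\}$. Your treatment is a modest elaboration of the paper's argument rather than a different one, and the geometric step you flag at the end (that $K$ cannot re-enter $\{y < L_t\}$ after the bridge time) is exactly the content the paper leaves implicit.
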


\begin{proof}
Clearly $\sigma \left(\F_t, L_t  \right) \subset \G_t$, since $L_t$ is determined by $D_t \backslash D$. For the other direction, it is clear that $D_t \cap D = \{ L \in D_t :  L \leq L_t \}$. Hence $D_t \cap D$ is determined by both $D_t$  (which is itself determined by $K_t$) and $L_t$. This is sufficient because for $s < t$ we have $D_s \cap D \subset D_t \cap D$, and hence $D_s \cap D$ is the intersection of $D_s$, which is $\F_s$-measurable, and $D_t \cap D$, which we have just shown is $\sigma \left( \F_t, L_t \right)$-measurable.
\end{proof}

\begin{proposition}\label{RightIsolated}
For a fixed $t > 0$, $L_t < L_t'$ with probability one.
\end{proposition}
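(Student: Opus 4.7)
The plan is to show that the future hull $\Lambda_t K$ is bounded strictly away from the line $\{y = L_t\}$, i.e., $\inf_{z \in \Lambda_t K} \Im z > L_t$ almost surely; this yields $L_t' > L_t$ since every destroyed bridge height $L \in D_t \setminus D$ must be hit by some point of $\Lambda_t K$, forcing $L \geq \inf \Im \Lambda_t K$.

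First I would observe that the weak inequality $L_t' \geq L_t$ is automatic: if $L \in D_t$ satisfies $L \leq L_t$, then since $L_t \in D$ forces $\Lambda_t K \subset \{y \geq L_t\} \supset \{y \geq L\}$, the unique point of $K_t \cap \{y = L\}$ remains the unique point of $K \cap \{y = L\}$, placing $L$ in $D$. Hence $D_t \setminus D \subset (L_t, \infty)$. Trivial edge cases can then be dispatched: if $D_t \cap D = \emptyset$ then $L_t$ is effectively $0$ (or $-\infty$) and $L_t' > 0$ holds trivially since bridge heights are strictly positive; and if the bridge time $\tau_t$ of $L_t$ equals $t$, then by the bridge property $K_t \subset \{y \leq L_t\}$ has no bridge heights above $L_t$, giving $D_t \setminus D = \emptyset$ and $L_t' = \infty$. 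We therefore reduce to the case $0 < \tau_t < t$, and let $p := \gamma(\tau_t)$ denote the unique bridge point at height $L_t$.

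The core step is showing $\Lambda_t K \cap \{y = L_t\} = \emptyset$. If some $z \in \Lambda_t K$ satisfied $\Im z = L_t$, then $z \in K \cap \{y = L_t\} = \{p\}$, forcing $z = p$. But by the Domain Markov property \eqref{DomainMarkov}, the conditional law of $\Lambda_t K$ given $\F_t$ is $\palph^{(\H \setminus \gamma[0,t], \gamma(t), \infty)}$, and restriction measures (see Section \ref{PrelimSection}) are supported on hulls meeting the boundary of their domain only at the two endpoints. Since $\tau_t < t$ places $p \in \gamma[0, \tau_t]$ on the boundary of $\H \setminus \gamma[0, t]$ and $p \neq \gamma(t)$, we conclude $p \notin \Lambda_t K$, contradicting $z = p$ and giving the claimed disjointness.

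To upgrade disjointness to a strictly positive gap, I would argue by a compactness-plus-transience decomposition. By the restriction-measure boundary condition, $\Lambda_t K$ is closed in $\overline{\H \setminus \gamma[0,t]}$ with no accumulation on $\partial(\H \setminus \gamma[0,t])$ outside its endpoints, so $\Lambda_t K \cap \{|z| \leq R\}$ is compact in $\H$ and disjoint from $\{y = L_t\}$, giving $\inf \Im > L_t$ strictly on this bounded part. For the unbounded part, the future curve $\gamma[t, \infty)$ is transient with $\Im \gamma(s) \to \infty$, and loops in $\Lambda_t K$ are compact and concentrated near the curve, so for $R$ large $\Lambda_t K \cap \{|z| > R\}$ sits well above height $L_t$. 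These combine to give $\inf_{z \in \Lambda_t K} \Im z > L_t$ almost surely, completing the proof. The main conceptual content lies in the third paragraph: the fact that $\Lambda_t K$ avoids not only the bridge line but the bridge point $p$ itself is what allows the passage from set-theoretic disjointness to a positive separation; the subsequent compactness argument is standard.
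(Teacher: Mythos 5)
Your approach is genuinely different from the paper's, and while the first two steps are sound and well-explained, the final step has a real gap.

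Both proofs hinge on showing that $\Lambda_t K$ stays at positive distance from the bridge line $\{y = L_t\}$. Your disjointness step (third paragraph) is correct and in fact more explicit than the paper's treatment: using that a restriction hull in $(\H \setminus \gamma[0,t], \gamma(t), \infty)$ meets $\partial(\H\setminus\gamma[0,t])$ only at $\gamma(t)$ and $\infty$, and that the bridge point $p$ lies on that boundary away from $\gamma(t)$, is a clean argument. The paper, by contrast, gets the positive separation by invoking the \emph{next} global bridge height $L'' > L_t$ of $K$: since $L_t$ and $L''$ are both global bridge heights, $K\cap\{L_t \le y \le L''\}$ is a compact set that meets $\{y=L_t\}$ only at the old bridge point, and $\Lambda_t K$ omits that point; above $y=L''$ the hull is trivially above $L_t$. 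Compactness then gives the strict gap immediately, for the curve and for all loops at once.

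Your fourth paragraph does not reach the same conclusion. You split into $\{|z|\le R\}$ and $\{|z|>R\}$ and assert that for $R$ large the unbounded part of $\Lambda_t K$ ``sits well above height $L_t$'' because $\Im\gamma(s)\to\infty$ and ``loops are compact and concentrated near the curve.'' Transience handles the curve, but the assertion about loops is unsupported: the loop soup contains arbitrarily large loops, and a priori a sequence of ever-larger loops attached at ever-farther points of $\gamma[t,\infty)$ could dip ever closer to (while still avoiding) $\{y=L_t\}$, making $\inf\Im\Lambda_t K = L_t$ without the sets ever touching. Nothing in your argument rules this out. The device that does rule it out is exactly the one the paper uses: the next bridge height $L''$ forces any loop dipping below $y=L''$ to be attached to the compact arc $\gamma[t,\tau_{L''}]$, so the whole offending portion of $\Lambda_t K$ lies inside the compact hull $K_{\tau_{L''}}$, and compactness plus disjointness finishes. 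Without introducing $L''$ (or some equivalent way of confining the problematic region to a compact set), your argument is incomplete.

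As a minor point, your second-paragraph edge cases are correct but unnecessary: for $t>0$ scale invariance of $D$ guarantees $D_t\cap D\ne\emptyset$ a.s., and the case $\tau_t=t$ is simply the (probability-zero) event that $t$ is a bridge time, which the paper dismisses in one line.
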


\begin{proof}
First observe that $t$ is almost surely not a bridge time. It is easy to see that the distance between $\gamma[t,\infty)$ and the last bridge line $\text{Im}(z)=L_t$ is strictly positive (for instance, there must exist another bridge height higher than $L_t$, and between, it is a continuous compact curve). But a bridge height for $\gamma[0,t)$ that is not a bridge height for the whole curve must be greater than $\inf \text{Im}(\gamma[t,\infty))$. We deduce that $L'_t$ is strictly greater than $L_t$.
\end{proof}

\begin{definition}
Given a subset $K$ of $\C$, define $J(K) := \inf \left \{ \Im \, z : z \in K \right \}$.
\end{definition}

With this definition in hand we state the paper's main technical theorem.

\begin{theorem}\label{DecompMarkov}
Suppose $K = (\gamma, \L)$ obeys the law $\palph$, and let $\tau$ be a $\Gt$-stopping time. On the event that $\tau$ is a bridge time the $\mathcal{G}_{\tau}$-conditional law of $\theta_{\tau}K$ is simply the law of a restriction hull in $\H$. If $\tau$ is not a bridge time then the conditional law of $\Lambda_{\tau}K$, given $\G_{\tau}$, is the same as the law of a restriction hull $K'$ in $\H \backslash \gamma[0,\tau]$ whose distribution is the restriction measure corresponding to the triple $(\H \backslash \gamma[0,\tau], \gamma(\tau), \infty)$, but further conditioned on the event $L_{\tau} < J(K') \leq L_{\tau}'$.
\end{theorem}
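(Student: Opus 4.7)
My approach is to exploit the decomposition $\G_\tau = \sigma(\F_\tau, L_\tau)$ from Proposition \ref{AlgebraEquality} in order to reduce the problem to the Domain Markov property \eqref{DomainMarkov}, which says that conditional on $\F_\tau$ alone, the future hull $K' := \Lambda_\tau K$ has the restriction law $\palph^{(\H\setminus\gamma[0,\tau], \gamma(\tau), \infty)}$. Since $L_\tau$ is not $\F_\tau$-measurable but is a functional of $\F_\tau$ together with $K'$, further conditioning on $L_\tau$ amounts to restricting the future hull to a specific sub-event whose nature is dictated by the value of $L_\tau$. The task is therefore to identify this sub-event explicitly in terms of $K'$ and then interpret the resulting conditional law via the restriction property. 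I would first establish the result at a fixed deterministic time $\tau = t$ and then extend to $\G_t$-stopping times by approximating $\tau$ from above by countably-valued $\G_t$-stopping times, splitting each into its countably many deterministic values.

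The central computation is to express the event $\{L_t = a\}$, given $\F_t$, in terms of $K'$ alone. For $a \in D_t$, requiring $L_t = a$ forces both that $a$ survives as a bridge height of $K$ (so $K'$ does not touch the line $y = a$) and that every strictly larger $b \in D_t$ is destroyed (so $K'$ does touch the line $y = b$). Every element of $D_t$ lies at or below the current curve height $\Im \, \gamma(t)$, and since the future curve is continuous and escapes to infinity, the first requirement translates to $J(K') > a$ while the second translates to $J(K') \leq L_t'$, with $L_t' = \inf D_t \cap (a, \infty)$ an $\F_t$-measurable function of $a$. Proposition \ref{RightIsolated} guarantees $L_t < L_t'$, so this conditioning event is non-degenerate. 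The identification, combined with the Domain Markov property, settles the non-bridge case directly.

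For the bridge case the value $L_t = \Im \, \gamma(t)$ is the maximum height attained by $\gamma[0,t]$, no bridge height of $K_t$ lies strictly above it, and therefore $L_t' = \infty$. The conditioning collapses to $K'$ meeting the line $y = \Im \, \gamma(t)$ only at its starting point $\gamma(t)$. Since $\gamma[0,t] \subset \{y \leq \Im \, \gamma(t)\}$ with equality only at $\gamma(t)$, this is equivalent to $K' \setminus \{\gamma(t)\} \subset \{y > \Im \, \gamma(t)\}$. The restriction property should then identify the conditional law of $K'$ with the restriction measure on $(\{y > \Im \, \gamma(t)\}, \gamma(t), \infty)$, and the translation $z \mapsto z - \gamma(t)$ combined with conformal invariance of the restriction measures produces $\palph$ as the conditional law of $\theta_t K$. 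The main technical obstacle is exactly here: the set $\{y \leq \Im \, \gamma(t)\}$ we excise touches the starting point $\gamma(t)$, so the restriction formula does not apply verbatim, and I would bridge this by first conditioning $K'$ to lie in $\{y > \Im \, \gamma(t) + \epsilon\}$, applying the standard restriction property on this shifted half-plane, and then letting $\epsilon \downarrow 0$ while using continuity of the restriction laws in the domain.

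Finally, to promote the fixed-time statement to a general $\G_t$-stopping time $\tau$, I would approximate $\tau$ from above by countably-valued $\G_t$-stopping times $\tau_n$, for which the analysis above applies on each level set, and pass to the limit using right-continuity of the hull parametrization. The remaining subtle point is that while a fixed deterministic $t$ satisfies $\probalph{t \in G}{\alpha} = 0$, well-chosen $\G_t$-stopping times (for instance the first time $L_t$ exceeds a prescribed positive level) land in $G$ with positive probability; the approximation must be arranged to preserve this property in the limit so that the bridge-case conditional statement carries genuine content rather than being vacuous.
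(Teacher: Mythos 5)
Your outline matches the paper's overall architecture: reduce $\G_\tau$ to $\sigma(\F_\tau,L_\tau)$ via Proposition~\ref{AlgebraEquality}, invoke the Domain Markov property~\eqref{DomainMarkov} to identify the $\F_\tau$-conditional law, interpret the further conditioning on $L_\tau$ as the event $L_\tau < J(K') \leq L_\tau'$, prove the fixed-time case first, and extend to general stopping times by approximating $\tau$ from above by dyadic-valued stopping times $\tau_n$. The identification of $\{L_t = a\}$ with the event that $K'$ stays strictly above $a$ but dips at or below the next higher element of $D_t$ is correct and is the same reduction the paper makes.

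There are, however, two genuine gaps. First, your $\epsilon$-regularization for the bridge case is stated with the wrong sign: conditioning $K'$ to lie in $\{y > \Im\,\gamma(t) + \epsilon\}$ is an event of probability zero, since $\gamma(t) \in K'$ always has imaginary part $\Im\,\gamma(t) < \Im\,\gamma(t)+\epsilon$. You presumably meant $\{y > \Im\,\gamma(t) - \epsilon\}$, which does have positive probability and under which the restriction property applies cleanly. More importantly, at a fixed deterministic $t$ the bridge case has probability zero anyway, so discussing it at fixed times is vacuous; the bridge case only carries content at stopping times, and your own final paragraph acknowledges this tension without resolving it. Second, and this is the central omission: the passage from $\tau_n$ to $\tau$ is glossed over as ``pass to the limit using right-continuity of the hull parametrization.'' This is precisely where the work lives. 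One must show that the conditional laws $\palph^{\tau_n}$ converge weakly to $\palph^{\tau}$, which the paper handles by a separate lemma. It requires expressing the conditional probabilities via the restriction formula as ratios of the form
\begin{align*}
\frac{\phi_{A_{\tau_n}}'(0)^{\alpha} - \phi_{A_{\tau_n}\cup S_{\tau_n}}'(0)^{\alpha}}{1 - \phi_{S_{\tau_n}}'(0)^{\alpha}},
\end{align*}
proving locally uniform convergence of the uniformizing maps $g_{\tau_n} \to g_{\tau}$ (using that $\gamma$ is a simple curve), and, in the bridge case where the numerator and denominator both tend to zero, factoring $\phi_{A\cup S} = \phi_U \circ \phi_A$ and carefully estimating the ratio of intersection probabilities as the strip $S_{\tau_n}$ collapses. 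None of this can be dispatched by appealing to right-continuity of $t\mapsto K_t$ alone, and without it the argument is incomplete.
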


\begin{remark}
Note that if $\tau$ is a bridge time then $L_{\tau} = \Im \gamma(\tau)$ and $L_{\tau'} = \infty$. In this situation the notation $L_{\tau} < J(K') < L_{\tau'}$ can be interpreted as meaning that the future hull lies strictly above the bridge line, which is an event of measure zero. To fully emphasize this very important point we have handled this case with a separate statement at the beginning of the theorem.
\end{remark}

Theorem \ref{DecompMarkov} should be seen as the extension of the Domain Markov property \eqref{DomainMarkov} to the enlarged filtration $\G_t$. In words, it simply says that the extra information in $\G_{\tau}$ forces the future restriction hull to go below the horizontal line $y = L_{\tau}'$ but stay above the horizontal line $y = L_{\tau}$. This extra conditioning stops $L_{\tau}'$ from being a bridge height for $K$ but preserves $L_{\tau}$ as a bridge height. A detailed proof of the theorem follows. It uses a standard procedure, which we modified from \cite{virag:beads}, to bootstrap from the easy case of $\tau$ being a deterministic time to the general case that $\tau$ is a stopping time.

\begin{proof}
To simplify notation, we will write
\begin{align*}
\palph^t := \palph^{(\H \backslash \gamma[0,t], \gamma(t), \infty)} \left( \, \cdot \, \left| L_{t} < J(K') \leq L_{t}' \right. \right)
\end{align*}
throughout this proof. The goal of the proof is to show that the $\G_{\tau}$-conditional law of $\Lambda_{\tau} K$ is $\palph^{\tau}$.

Consider first the case that $\tau$ is a deterministic time $t$. Recall that conditioning on $\Gt$ is the same as conditioning on $\F_t$ and $L_t$, by Proposition \ref{AlgebraEquality}. Conditional on $\F_t$, the Domain Markov property \eqref{DomainMarkov} says that $\Lambda_t K$ has the restriction law for the triple $(\H \backslash \gamma[0,t], \gamma(t), \infty)$. Conditioning again on $L_t$ forces the future hull to stay above $y = L_t$ but to go below $y = L_t'$, and since $L_t < L_t'$ with positive probability this conditioning is well-defined. Hence the law conditioned on $\G_t$ is exactly $\palph^t$.

Another way of stating the above is as follows: let $X$ be a bounded, continuous\footnote{The topology we consider is close to the Caratheodory topology and has been defined in \cite[Lemma 3.5]{lsw:conformal_restriction}} function on hulls. Then
\begin{align}\label{MarkovEquiv}
\condsubexpect{X(\Lambda_t K)}{\Gt}{\alpha}{} = \subexpect{X}{\alpha}{t},
\end{align}
where $\E_{\alpha}$ and $\E_{\alpha}^t$ denote expectations with respect to $\palph$ and $\palph^t$, respectively. To finish the proof we need to extend \eqref{MarkovEquiv} to $\Gt$-stopping times instead of just fixed times. First suppose that $\tau$ only takes values in some countable set $\mathcal{T}$. Then
\begin{align*}
\condsubexpect{X(\Lambda_{\tau} K)}{\mathcal{G}_{\tau}}{\alpha}{} &= \sum_{t \in \mathcal{T}} \condsubexpect{X(\Lambda_{\tau} K) \indicate{\tau=t}}{\mathcal{G}_{\tau}}{\alpha}{} \\
&= \sum_{t \in \mathcal{T}} \condsubexpect{X(\Lambda_t K) \indicate{\tau = t}}{\Gt}{\alpha}{} \\
&= \sum_{t \in \mathcal{T}} \indicate{\tau = t} \condsubexpect{X(\Lambda_t K)}{\Gt}{\alpha}{} \\
&= \sum_{t \in \mathcal{T}} \indicate{\tau = t} \subexpect{X}{\alpha}{t} \\
&= \subexpect{X}{\alpha}{\tau}.
\end{align*}
From this we can bootstrap up to the case of general $\tau$. Let $\tau_n$ be the smallest element of $2^{-n} \N$ that is greater than or equal to $\tau$. Then the last argument applies to $\tau_n$, so that
\begin{align}\label{DiscreteX}
\condsubexpect{X(\Lambda_{\tau_n} K)}{\mathcal{G}_{\tau_n}}{\alpha}{} = \subexpect{X}{\alpha}{\tau_n}.
\end{align}
However, since $\tau_n$ is determined at time $\tau$ (i.e. $\tau_n$ is $\G_{\tau}$-measurable),
\begin{align*}
\condsubexpect{X(\Lambda_{\tau_n} K)}{\mathcal{G}_{\tau_n}}{\alpha}{} = \condsubexpect{X(\Lambda_{\tau_n} K)}{\mathcal{G}_{\tau}}{\alpha}{}.
\end{align*}
Since $\Lambda_{\tau_n} K \to \Lambda_{\tau} K$ as $n \to \infty$, and $X$ is bounded and continuous, it follows that the left hand side of \eqref{DiscreteX} converges to
\begin{align*}
\condsubexpect{X(\Lambda_{\tau} K)}{\mathcal{G}_{\tau}}{\alpha}{}.
\end{align*}
Hence, if we can show that $\subexpect{X}{\alpha}{\tau_n}$ converges to $\subexpect{X}{\alpha}{\tau}$ then we are done. Since $X$ is bounded and continuous, this is equivalent to showing that almost surely the law $\palph^{\tau_n}$ converges weakly to $\palph^{\tau}$, which we prove in the next lemma.
\end{proof}

\begin{lemma}
Let $\tau$ be a $\G_t$-stopping time and $\tau_n$ be the smallest element of $2^{-n} \N$ that is greater than or equal to $\tau$. Then $\palph^{\tau_n}$ converges weakly to $\palph^{\tau}$ with probability one, where we define $\palph^{\tau} \left( \cdot \right) := \palph \left( \theta_{\tau} \cdot \right)$ in the case that $\tau$ is a bridge time.
\end{lemma}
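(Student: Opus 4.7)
The plan is to prove weak convergence by showing $\subexpect{X}{\alpha}{\tau_n} \to \subexpect{X}{\alpha}{\tau}$ almost surely for every bounded continuous $X$ on hulls, splitting the analysis by whether $\tau$ is a bridge time. The ingredients I would rely on are right continuity of the hull parameterization $t \mapsto K_t$ (built into the curve-plus-loops construction), continuity of $\gamma$, and perfectness of $D$ from Theorem \ref{BridgeDimension}\eqref{prop:perfect}.

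First I would establish the almost sure convergences $K_{\tau_n} \to K_\tau$, $\gamma(\tau_n) \to \gamma(\tau)$, $L_{\tau_n} \to L_\tau$, and $L_{\tau_n}' \to L_\tau'$. Right continuity of the hull gives the first two. For $L_{\tau_n}$, the key observation is that $D_t \cap D$ is nondecreasing in $t$, since once a global bridge point enters $K_t$ it remains the unique point of $K$ on its horizontal line for all future $t$; hence $L_t$ is nondecreasing and right continuous by a direct accumulation argument. In the bridge case specifically, $L_\tau = \Im \gamma(\tau)$, and any global bridge height of $K_{\tau_n}$ strictly above $L_\tau$ necessarily equals $\Im \gamma(s)$ for some bridge time $s \in (\tau, \tau_n]$; by perfectness of $D$ combined with continuity of $\gamma$, these heights accumulate at $L_\tau$, forcing $L_{\tau_n} \to L_\tau$. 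For $L_{\tau_n}'$ in the bridge case, every non-global bridge height of $K_{\tau_n}$ lies in the shrinking interval $(L_\tau, \Im_{\max} K_{\tau_n}]$, so $L_{\tau_n}' \to L_\tau$ as well.

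When $\tau$ is not a bridge time, $L_\tau < L_\tau'$ strictly by a stopping-time extension of Proposition \ref{RightIsolated} (using dyadic approximation together with the Markov structure for deterministic times already established in the proof of Theorem \ref{DecompMarkov}). The conditioning event then has probability bounded uniformly away from zero, and one writes
\begin{align*}
\subexpect{X}{\alpha}{\tau_n} = \frac{\subexpect{X \indicate{L_{\tau_n} < J(K') \leq L_{\tau_n}'}}{\alpha}{(\H \backslash \gamma[0,\tau_n], \gamma(\tau_n), \infty)}}{\palph^{(\H \backslash \gamma[0,\tau_n], \gamma(\tau_n), \infty)}(L_{\tau_n} < J(K') \leq L_{\tau_n}')}
\end{align*}
as a ratio of quantities depending continuously on $(\gamma[0,\tau_n], \gamma(\tau_n), L_{\tau_n}, L_{\tau_n}')$, via the restriction formula \eqref{genRestrictionFormula} and Carath\'{e}odory continuity of the associated conformal maps. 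The convergence is then routine.

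The bridge case is the main obstacle, since the conditioning event collapses to a measure-zero event and the ratio above breaks down. The remedy is to use the restriction property to absorb the lower bound into the domain: setting $U_{\tau_n} := (\H \backslash \gamma[0,\tau_n]) \cap \{\Im z > L_{\tau_n}\}$, the restriction property identifies $\palph^{(\H \backslash \gamma[0,\tau_n], \gamma(\tau_n), \infty)}$ conditioned on $\{J(K') > L_{\tau_n}\}$ with the unconditional law $\palph^{(U_{\tau_n}, \gamma(\tau_n), \infty)}$. The additional constraint $\{J(K') \leq L_{\tau_n}'\}$ is controlled by the sandwich $L_{\tau_n}, L_{\tau_n}' \to L_\tau$, which forces $J(K')$ to concentrate at $L_\tau$ in the limit. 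As $n \to \infty$, the marked triple $(U_{\tau_n}, \gamma(\tau_n), \infty)$ converges in Carath\'{e}odory sense to $(\{\Im z > L_\tau\}, \gamma(\tau), \infty)$, and by translation invariance (mapping this half-plane to $\H$ via $z \mapsto z - \gamma(\tau)$) the restriction measure on the limit domain coincides with $\palph(\theta_\tau \cdot)$, completing the identification.
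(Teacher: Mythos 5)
Your high-level structure agrees with the paper's: fix a bounded continuous $X$ (or equivalently avoidance probabilities of fixed hulls $A$), split on whether $\tau$ is a bridge time, absorb the lower conditioning $\{J(K') > L_{\tau_n}\}$ into the domain via the restriction property, and appeal to Carath\'eodory convergence of the maps $g_{\tau_n} \to g_\tau$. The non-bridge case is then fine: since $L_\tau < L_\tau'$ strictly, the conditioning event retains probability bounded away from zero, and the ratio formula \eqref{bigFormula} is continuous in the data. That part of your argument is essentially the same as the paper's, even if the paper observes the slightly stronger fact that $L_{\tau_n} = L_\tau$ and, in the first subcase, $L_{\tau_n}' = L_\tau'$ exactly for $n$ large (using that $G$ is closed).

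The gap is in the bridge case. There the conditioning event $\{J(K') \leq L_{\tau_n}'\}$ under $\palph^{(U_{\tau_n}, \gamma(\tau_n), \infty)}$ has probability tending to \emph{zero} as $n \to \infty$, because $L_{\tau_n}' \downarrow L_\tau$ while the unconditioned hull from $\gamma(\tau_n)$ typically does not dip all the way down to $L_{\tau_n}'$. You write that ``the additional constraint is controlled by the sandwich\ldots which forces $J(K')$ to concentrate at $L_\tau$ in the limit,'' but that sandwich controls the \emph{unconditioned} distribution of $J(K')$ only. What has to be shown is that conditioning on an event of vanishing probability does not distort the limiting law of the rest of the hull (e.g., the avoidance probability of a fixed $A$ away from zero). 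This is exactly the content of the paper's computation showing
\begin{align*}
\frac{1 - \phi'_{U_{\tau_n}}(0)^\alpha}{1 - \phi'_{S_{\tau_n}}(0)^\alpha} \;=\; \frac{\palph\bigl(K \cap U_{\tau_n} \neq \emptyset\bigr)}{\palph\bigl(K \cap S_{\tau_n} \neq \emptyset\bigr)} \;\longrightarrow\; 1,
\end{align*}
which is established by identifying the numerator as $\palph^{(\H \backslash A_{\tau_n},0,\infty)}(K \cap S_{\tau_n} \neq \emptyset)$ and arguing that both probabilities are dominated by hulls meeting the shrinking strip $S_{\tau_n}$ near the origin, where $A_\tau$ (which is at positive distance from $0$) is irrelevant. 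Without some version of this quantitative decoupling, your bridge case does not close; a conditional law whose conditioning event shrinks to measure zero need not converge to the unconditional limit in general.

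A second, more minor issue: you invoke the perfectness of $D$ (absence of isolated points) to argue $L_{\tau_n} \to L_\tau$. In the paper, perfectness is proved only at the end of Section 4, as a consequence of Corollary \ref{ShiftedMarkovCorollary}, which in turn rests on Theorem \ref{DecompMarkov} and hence on this very lemma. So using it here would be circular. Fortunately it is unnecessary: in the bridge case $L_\tau \leq L_{\tau_n} \leq \Im\,\gamma(\tau_n)$, and continuity of $\gamma$ alone gives $L_{\tau_n} \to L_\tau$, exactly as the paper does.
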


\begin{proof}
Throughout this proof we will let $H_t := (\H + iL_t) \backslash \gamma[0,t]$.

As shown in \cite[Lemma 3.2]{lsw:conformal_restriction}, a probability measure on unbounded hulls in the plane is uniquely determined by the collection of probabilities
\begin{align*}
\prob{ K \cap A = \emptyset}
\end{align*}
that is indexed by a sufficiently large class of hulls $A$. Hence it is enough to show that
\begin{align}\label{probConvergence}
\palph^{\tau_n} \left(K' \cap A = \emptyset \right) \to \palph^{\tau} \left( K' \cap A = \emptyset \right)
\end{align}
for all hulls $A$ in this class, with probability one. In our case, it is sufficient to prove that for each fixed restriction hull in $\H$, the convergence \eqref{probConvergence} holds for all hulls $A$ in $H_{\tau}$ that are a positive distance from $\gamma(\tau)$. Note that since $\tau_n \downarrow \tau$ and $\gamma$ is continuous, for sufficiently large $n$ one must have that $A$ is at positive distance from $\gamma(\tau_n)$ also. Hence the probabilities on both sides are well defined. We prove \eqref{probConvergence} in the two distinct cases that $\tau$ is and is not a bridge time.

$ $\\
\textsc{Case 1: $\tau$ is not a bridge time}

First observe that in the definition of $\palph^t$, the conditioning $J(K') > L_t$ forces the hull $K'$ to avoid the region $\{\Im \, z \leq L_t \}$, and by the restriction property this can equally be achieved by sampling $K'$ from the restriction measure corresponding to the triple $(H_t, \gamma(t), \infty)$. Thus we have the relation
\begin{align*}
\palph^{(\H \backslash \gamma[0,t], \gamma(t), \infty)} \left( \, \cdot \, \left| L_{t} < J(K') \leq L_{t}' \right. \right) = \palph^{(H_t, \gamma(t), \infty)} \left( \, \cdot \, \left | J(K') \leq L_{t}' \right. \right).
\end{align*}
Let $g_t$ be the conformal map from $H_t$ onto $\H$ such that $g_t(\gamma(t)) = 0$ and $g_t(z) \sim z$ as $z \to \infty$. Let $R_t := \{ z \in H_t : \Im \, z \leq L_t' \}$. Then
\begin{align*}
\palph^t \left( \cdot \right) = \palph^{(H_t, \gamma(t), \infty)} \left( \cdot \left | K' \cap R_t \neq \emptyset \right. \right).
\end{align*}
The first key observation is that for all $n$ sufficiently large we have that $L_{\tau_n} = L_{\tau}$. This equality is clear since $G$ is closed, and hence $\tau_n$ must belong to the same connected component of $G^c$ that $\tau$ belongs to, for $n$ sufficiently large. For these $n$ we have $L_{\tau_n} = L_{\tau}$. For $L_{\tau}'$ there are two distinct possibilities, which we now treat separately.

First note that necessarily $L_{\tau}' < \infty$. Indeed, the maximum of the imaginary part of $\Im \, K_{\tau}$ is always an element of $D_{\tau}$, and since $\tau$ is not a bridge time this maximum cannot be in $D$. So first consider the case that $L'_{\tau} < \Im \, \gamma(\tau)$. By formula \eqref{genRestrictionFormula}, we have that
\begin{align}
\palph^t \left( K' \cap A = \emptyset \right) &= \frac{\palph^{(H_t, \gamma(t), \infty)} \left(K' \cap A = \emptyset, K' \cap R_t \neq \emptyset \right)}{\palph^{(H_t, \gamma(t), \infty)} \left( K' \cap R_t \neq \emptyset \right)} \notag \\
&= \frac{\phi_{A_t}'(0)^{\alpha} - \phi_{A_t \cup S_t}'(0)^{\alpha}}{1 - \phi_{S_t}'(0)^{\alpha}}. \label{bigFormula}
\end{align}
where $A_t = g_t(A)$ and $S_t = g_t(R_t)$ (this is justified since neither $A$ nor $R_{\tau}$ contains $\gamma(\tau)$). Equation \eqref{bigFormula} shows that it is sufficient to prove
\begin{align}\label{threeConvergences}
\phi_{A_{\tau_n}}'(0) \to \phi_{A_{\tau}}'(0), \quad \phi_{A_{\tau_n} \cup S_{\tau_n}}'(0) \to \phi_{A_{\tau} \cup S_{\tau}}'(0), \quad \phi_{S_{\tau_n}}'(0) \to \phi_{S_{\tau}}'(0).
\end{align}
For $n$ large enough, $L_{\tau_n}' = L_{\tau}'$ since for any neighborhood of $\Im \, \gamma(\tau)$ there is an $n$ sufficiently large such that $D_{\tau_n} \backslash D_{\tau}$ is contained within this neighborhood. Since $L_{\tau}' < \Im \, \gamma(\tau)$, by making the neighborhood sufficiently small we get that $D_{\tau_n} \backslash D$ and $D_{\tau} \backslash D$ must have the same infimum; that is $L_{\tau_n}' = L_{\tau}'$. Hence, $A_{\tau_n}$ and $S_{\tau_n}$ are only decreasing as $\gamma[0,\tau_n]$ decreases, and again since $\gamma[0,\tau_n]$ is a simple curve that shrinks to $\gamma[0, \tau]$ it follows that $g_{\tau_n}$ converges uniformly to $g_{\tau}$ on all subcompacts of $H_{\tau}$, from which the convergences of \eqref{threeConvergences} follow (by Cauchy's derivative formula and the Schwarz reflection principle, see \cite{lsw:conformal_restriction}).

The second possibility is to have $L'_{\tau} = \Im \, \gamma(\tau)$. On the one hand, the conditioning on $K'$ going below $\Im(\gamma(\tau))$ is trivial so that $\palph^{\tau}=\palph^{(H_{\tau},\gamma(\tau),\infty)}$. On the other hand, $L'_{\tau_n}$ is greater than $L'_{\tau}$ so that one can strengthen the conditioning of $\palph^{\tau_n}$ by requiring that the future hull goes below $L'_{\tau}$. Since $\gamma$ is a simple curve shrinking to 0, one again has that $g_{\tau_n}$ converges uniformly to $g_{\tau}$ on all subcompacts of $H_{\tau}$, which proves that the conditioning becomes trivial.

$ $\\
\textsc{Case 2: $\tau$ is a bridge time}

In this case note that $A$ is a hull in the domain $\H + i \Im \, \gamma(\tau) = \H + i L_{\tau}$; hence it is simply a translate of a hull in $\H$. Moreover $g_{\tau}$ is simply the shift map $z \to z - \gamma(\tau)$, from which it follows that $A_{\tau} = A - \gamma(\tau)$ and $S_{\tau} = \H$. Since $\palph^{\tau}(\cdot) = \palph(\theta_{\tau} \cdot)$, proving \eqref{probConvergence} amounts to showing that
\begin{align*}
\palph^{\tau_n} \left( K' \cap A = \emptyset \right) \to \phi_{A_{\tau}}'(0).
\end{align*}
We use \eqref{bigFormula} to rewrite the left hand side. Define $U_t = \phi_{A_t}(S_t \cap A_t^c)$ so that
\begin{align*}
\phi_{A_t \cup S_t} = \phi_{U_t} \circ \phi_{A_t},
\end{align*}
from which it follows that
\begin{align*}
\phi'_{A_t \cup S_t}(0) = \phi'_{U_t}(0) \phi'_{A_t}(0).
\end{align*}
Therefore
\begin{align*}
\palph^{\tau_n} \left( K' \cap A = \emptyset \right) = \phi'_{A_{\tau_n}}(0)^{\alpha} \frac{1 - \phi'_{U_{\tau_n}}(0)^{\alpha}}{1 - \phi'_{S_{\tau_n}}(0)^{\alpha}}.
\end{align*}
The convergence of $\phi_{A_{\tau_n}}'(0)$ to $\phi_{A_{\tau}}'(0)$ is simple since it only involves the map $g_{\tau_n}$. Note that $L_{\tau} \leq L_{\tau_n} \leq \Im \, \gamma(\tau_n)$, so that the domains $H_{\tau_n}$ converge to $H_{\tau}$, and since $\gamma$ is a simple curve it once again follows that $g_{\tau_n}$ converges uniformly to $g_{\tau}$ on all subcompacts of $A_{\tau}$. As before, this implies the convergence of $\phi_{A_{\tau_n}}'(0)$ to $\phi_{A_{\tau}}'(0)$.

It remains to be shown that, as $n \to \infty$,
\begin{align*}
\frac{1 - \phi'_{U_{\tau_n}}(0)^{\alpha}}{1 - \phi'_{S_{\tau_n}}(0)^{\alpha}} = \frac{\palph \left ( K'' \cap U_{\tau_n} \neq \emptyset \right )}{\palph \left( K'' \cap S_{\tau_n} \neq \emptyset \right)} \to 1.
\end{align*}
Observe that
\begin{align*}
\palph \left( K \cap U_{\tau_n} \neq \emptyset \right) &= \palph \left( K \cap \phi_{A_{\tau_n}}(S_{\tau_n} \cap A_{\tau_n}^c) \neq \emptyset \right) \\
&= \palph^{(\H \backslash A_{\tau_n}, 0, \infty)} \left( K \cap S_{\tau_n} \neq \emptyset \right) \\
&\sim \palph^{(\H \backslash A_{\tau}, 0, \infty)} \left( K \cap S_{\tau_n} \neq \emptyset \right).
\end{align*}
The last relation follows since $g_{\tau_n}$ converges uniformly to $g_{\tau}$ on all subcompacts of $H_{\tau_n}$, to which $A$ eventually belongs, so that $A_{\tau_n}$ converges to $A_{\tau}$. Next recall that $S_{\tau_n} = g_{\tau_n} \left( R_{\tau_n} \right)$, and
$$0 < \sup \Im R_{\tau_n} \leq L_{\tau_n}' - \Im \gamma(\tau),$$ with the right hand side going to zero as $n \to \infty$. Since the distance of $A_{\tau}$ from zero is positive, for $n$ sufficiently large the probability that a restriction hull intersects $S_{\tau_n}$ is of the order of $\sup \Im R_{\tau_n}$ and dominated by hulls that intersect $S_{\tau_n}$ near zero. Since the set $S_{\tau_n}$ is the same near zero in both $\H$ and $\H \backslash A_{\tau}$, the ratio
\begin{align*}
\frac{ \palph^{(\H \backslash A_{\tau}, 0, \infty)} \left( K \cap S_{\tau_n} \neq \emptyset \right) }{ \palph \left( K \cap S_{\tau_n} \neq \emptyset \right) }
\end{align*}
tends to $1$.
\end{proof}

\begin{remark}
Theorem \ref{DecompMarkov} is most useful when $\tau$ is a bridge time, meaning it almost surely takes values in $G$. In that case $\gamma(\tau)$ is a bridge point for $K$, and the corresponding bridge line separates the future hull from the past. Shifting the future hull back to the origin by subtracting off $\gamma(\tau)$, we have the following:
\end{remark}

\begin{corollary}\label{ShiftedMarkovCorollary}
At $\Gt$-stopping times $\tau$ that almost surely take values in $G$, the shifted future hull $\theta_{\tau}K$ obeys the law $\palph$.
\end{corollary}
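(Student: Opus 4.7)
The plan is to derive the corollary as a direct consequence of Theorem \ref{DecompMarkov}. Specifically, I would invoke the first sentence of that theorem, which states that on the event $\{\tau \in G\}$ the $\mathcal{G}_{\tau}$-conditional law of $\theta_{\tau}K$ is $\palph$. Since $\tau$ is assumed to take values in $G$ with probability one, this event has full measure, so the conditional law is $\palph$ almost surely.

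Concretely, for any bounded continuous functional $X$ on unbounded hulls, Theorem \ref{DecompMarkov} gives
\begin{align*}
\condsubexpect{X(\theta_{\tau}K)}{\mathcal{G}_{\tau}}{\alpha}{} = \subexpect{X}{\alpha}{}, \quad \palph\text{-a.s.,}
\end{align*}
where the right-hand side is the (deterministic) expectation under $\palph$. Taking $\palph$-expectations of both sides and using the tower property yields
\begin{align*}
\subexpect{X(\theta_{\tau}K)}{\alpha}{} = \subexpect{X}{\alpha}{},
\end{align*}
which shows that the law of $\theta_{\tau}K$ is exactly $\palph$.

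The only point that requires a moment's care is that the deterministic conditional law in fact gives more than the stated corollary: because the conditional law of $\theta_{\tau}K$ given $\mathcal{G}_{\tau}$ does not depend on $\mathcal{G}_{\tau}$ at all, one also obtains that $\theta_{\tau}K$ is independent of $\mathcal{G}_{\tau}$. I would state this independence as a brief remark accompanying the proof, since it is the property that is actually used in Section \ref{LocalTimeSection} to produce the Poisson Point Process structure in Theorem \ref{LocalTimeIntroTheorem}. There is essentially no obstacle here beyond unpacking the statement of Theorem \ref{DecompMarkov}; the real work has already been done in handling the measure-zero conditioning and the limiting argument for stopping times in the proof of that theorem.
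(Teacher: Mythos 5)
Your proposal is correct and takes exactly the route the paper intends: the paper gives no separate proof of Corollary \ref{ShiftedMarkovCorollary}, treating it as an immediate consequence of the first sentence of Theorem \ref{DecompMarkov} together with the observation (made in the preceding remark) that when $\tau$ almost surely takes values in $G$, the conditioning event has full measure. Your added remark about independence from $\mathcal{G}_{\tau}$ is a useful clarification and is indeed the form in which the corollary gets used in Section \ref{LocalTimeSection}.
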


Corollary \ref{ShiftedMarkovCorollary} will be the key element in proving that the restriction hulls can be decomposed into a Poisson Point Process, which is the subject of the next section. Before doing that, we immediately apply the corollary to Theorem \ref{BridgeDimension}, part \eqref{prop:perfect} by showing that $C$ and $D$ almost surely have no isolated points.

\begin{proof}[\textbf{Proof of Theorem \ref{BridgeDimension}, part \eqref{prop:perfect}}]
We have already shown that $C$ and $D$ are closed, we prove that $C$ has no isolated points. Almost surely, zero is not isolated in $C$ because of the scale invariance and the fact that bridge points exist. For a rational number $r$, let $\tau_r$ be the first bridge time after time $r$. Then by the previous corollary, we deduce that the law of $\theta_{\tau_r}K$ obeys the law $\palph$. Since $\gamma(\tau_r)$ shifts to zero under $\theta_{\tau_r}$, the previous remark shows that  $\gamma(\tau_r)$ is almost surely not isolated. From these facts we deduce that the event $\{ \gamma(\tau_r)$ is not isolated in $C$ for all rational $r\}$ has probability one. If a point $\gamma(t) \in C$ were isolated then there would have to be an interval of time around $t$ which contains no other bridge times, but since this interval contains a rational time we arrive at a contradiction.
\end{proof}

\section{Local Time of the Decomposition \label{LocalTimeSection}}

In this section we will show that there exists a natural local time on the bridge heights that we use to decompose the restriction hulls into a Poisson Point Process of irreducible bridges. All the results of this section derive from the theory of subordinators and regenerative sets, which is well described in \cite{bertoin:subordinators}. We briefly recall the definition of regenerative sets, which is taken from \cite[Chapter 2]{bertoin:subordinators}.

\begin{definition}
A random subset $S$ of $[0, \infty)$ is a \textit{regenerative set} with respect to a filtration $\F_t$ if for every $s \geq 0$, conditionally on $M_s = \inf \{ t > s : t \in S\} < \infty$, the shifted set $\left( S - M_s \right) \cap [0, \infty)$ has the same law as $S$ and is independent of $\mathcal{F}_{M_s}$.
\end{definition}

Using the results of Sections \ref{BridgeSection} and \ref{RenewalSection}, we can immediately prove:

\begin{proposition}\label{DecompLocalTime}
The set $D$ of bridge heights is regenerative with respect to $\mathcal{D}_L := \sigma(D \cap [0,L])$.
\end{proposition}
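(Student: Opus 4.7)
The plan is to reduce the regenerative property of the level set $D$ to the Markov/renewal property of the hull that was established in Corollary \ref{ShiftedMarkovCorollary}. The bridge $M_L$ is the image of a hull stopping time, and at that stopping time we already know the future hull is independent of the past and has law $\palph$; applying this to the imaginary projection will yield the claim.

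More concretely, for each $L \geq 0$ I would define
\[
\tau_L := \inf\{ t \geq 0 : t \in G \text{ and } \Im\,\gamma(t) > L \},
\]
the first bridge time whose bridge height exceeds $L$. The first step is to check that $\tau_L$ is a $\Gt$-stopping time: the event $\{\tau_L \leq t\}$ is $\{D_t \cap D \cap (L,\infty) \neq \emptyset\}$, and the set $D_t \cap D$ is by definition $\Gt$-measurable. Since $D$ is almost surely unbounded above (it has positive Hausdorff dimension by Theorem \ref{BridgeDimension} and is scale invariant), we also have $\tau_L < \infty$ and $\tau_L \in G$ almost surely. Corollary \ref{ShiftedMarkovCorollary}, which comes from the fact that the $\mathcal{G}_{\tau_L}$-conditional law of $\theta_{\tau_L}K$ is just $\palph$ (Theorem \ref{DecompMarkov}), then gives that $\theta_{\tau_L} K$ has the law $\palph$ and is independent of $\mathcal{G}_{\tau_L}$.

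Now I would translate this from hulls to bridge heights. Setting $M_L := \Im\,\gamma(\tau_L)$, which equals $\inf\{L' > L : L' \in D\}$ because $D$ is closed (Theorem \ref{BridgeDimension}, part \eqref{prop:perfect}), the shifted hull $\theta_{\tau_L} K$ lives entirely in $\mathbb{H}$ (since $\tau_L$ is a bridge time) and its bridge heights are exactly $(D - M_L) \cap (0,\infty)$, because every horizontal line above $M_L$ meets $K$ in the same number of points as the corresponding line meets $\theta_{\tau_L} K$ shifted down by $M_L$. Since the law of $\theta_{\tau_L} K$ is $\palph$, it follows that $(D - M_L) \cap [0,\infty)$ has the same law as $D$ (after including the almost sure point $\{0\}$ as a limit point, which matches the convention that a regenerative set contains its left endpoint).

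For the independence half of the regenerative property, the key observation is that $\mathcal{D}_{M_L} \subseteq \mathcal{G}_{\tau_L}$. Indeed, the set $D_{\tau_L} \cap D$ is $\mathcal{G}_{\tau_L}$-measurable by definition of $\mathcal{G}_t$, and since the curve $\gamma[0,\tau_L]$ reaches its maximum imaginary part exactly at the bridge point $\gamma(\tau_L)$, one has $D_{\tau_L} \cap D = D \cap [0, M_L]$. Combined with the previous paragraph, $(D - M_L)\cap [0,\infty)$ is independent of $\mathcal{D}_{M_L}$ and has the same law as $D$, which is the regenerative property. The one spot that needs care is the identification of $\tau_L$ with a genuine $\Gt$-stopping time and the verification that $D \cap [0,M_L]$ really is recoverable from $D_{\tau_L} \cap D$; both boil down to the fact that the bridge times of $K_t$ that are global bridge times are precisely the points of $D \cap [0, \sup\Im\,K_t]$, which is a direct consequence of the definitions.
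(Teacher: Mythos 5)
Your proof is correct and takes the same approach as the paper: define $\tau_L$ as the first bridge time whose bridge height exceeds $L$, apply Corollary~\ref{ShiftedMarkovCorollary} to get that $\theta_{\tau_L}K$ has law $\palph$ and is independent of $\mathcal{G}_{\tau_L}$, identify the bridge heights of $\theta_{\tau_L}K$ with $(D - M_L)\cap(0,\infty)$, and observe the inclusion $\mathcal{D}_{M_L}\subset\mathcal{G}_{\tau_L}$. You fill in several small details that the paper leaves implicit (the measurability check $\{\tau_L\leq t\}=\{D_t\cap D\cap(L,\infty)\neq\emptyset\}$, the identity $D_{\tau_L}\cap D=D\cap[0,M_L]$), and you are slightly more careful than the paper's final line in writing $\mathcal{D}_{M_L}$ rather than $\mathcal{D}_L$, which is what Bertoin's definition of a regenerative set actually requires.
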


\begin{proof}
Consider $L \geq 0$. Since $D$ is closed, $M_L \in D$ almost surely. Then $M_L$ is a bridge height, and the time $\tau_L$ at which the curve reaches this bridge height is a $\Gt$-stopping time taking values in $G$. By Corollary \ref{ShiftedMarkovCorollary}, the $\mathcal{G}_{\tau_L}$-law of $\theta_{\tau_L}K$ is the same as the original law of $K$. Consequently, the $\mathcal{G}_{\tau_L}$-law of $D(\theta_{\tau_L}K) = D - M_L$ is the same as the law of $D$. Since $\mathcal{D}_L \subset \mathcal{G}_{\tau_L}$ this completes the proof.
\end{proof}

Proposition \ref{DecompLocalTime} proved that the set $D$ is regenerative, and consequently by \cite[Theorem 2.1]{bertoin:subordinators} it is the closure of the image of some subordinator (and the subordinator is unique up to a linear change of its time scale). On the other hand, Theorem \ref{BridgeDimension} showed that $D$ is scale invariant, and it is an easy step to deduce from this that the subordinator must be stable. Recall that there is a one-parameter family of stable subordinators, indexed by the real numbers between $0$ and $1$, and, as shown in \cite[Chapter 5]{bertoin:subordinators}, the index of a stable subordinator is the same as the Hausdorff dimension of its image. Hence we have the following:

\begin{corollary}\label{subordinatorCorollary}
Under the law $\palph$, the set $D$ is the closure of the image of a stable subordinator $(\sigma_{\lambda}, \lambda \geq 0)$ of index $2-2\alpha$.
\end{corollary}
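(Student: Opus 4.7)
The plan is to combine Proposition \ref{DecompLocalTime} with classical results from the theory of subordinators. Since $D$ is a regenerative subset of $[0,\infty)$ by that proposition, Theorem 2.1 of \cite{bertoin:subordinators} produces a subordinator $(\sigma_{\lambda}, \lambda \geq 0)$, unique up to a linear change of its time scale, such that
\begin{align*}
D = \overline{\{\sigma_{\lambda} : \lambda \geq 0\}} \quad \palph\text{-a.s.}
\end{align*}
This is the only nontrivial input and it was already established in the previous section.

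Next, I would use the scale invariance of $D$ from Theorem \ref{BridgeDimension}\eqref{prop:scaling} to upgrade this subordinator to a stable one. For any $r > 0$, the rescaled process $(r\sigma_{\lambda}, \lambda \geq 0)$ is itself a subordinator whose closed image equals $rD$. But $rD \stackrel{d}{=} D$ by scale invariance, so $(r\sigma_{\lambda})_{\lambda\geq 0}$ and $(\sigma_{\lambda})_{\lambda\geq 0}$ generate equally distributed regenerative sets. The uniqueness clause in Bertoin's theorem then forces the existence of a positive constant $c(r)$ such that
\begin{align*}
(r\sigma_{\lambda})_{\lambda \geq 0} \stackrel{d}{=} (\sigma_{c(r)\lambda})_{\lambda \geq 0}.
\end{align*}
This is exactly the self-similarity property that characterizes stable subordinators, so $\sigma$ is stable of some index $\beta \in (0, 1)$.

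Finally, to identify the index I would invoke the standard result (see Chapter 5 of \cite{bertoin:subordinators}) that the Hausdorff dimension of the closed range of a stable subordinator of index $\beta$ equals $\beta$ almost surely. By Theorem \ref{BridgeDimension}\eqref{prop:hdim}, $\haussdim D = 2-2\alpha$ almost surely (and this is in $(0,1)$ precisely when $\alpha \in (1/2, 1)$, which is guaranteed since $\alpha \geq 5/8$ and we are in the regime $\alpha < 1$ where $D$ is nonempty by part \eqref{prop:emptyC}). Therefore $\beta = 2-2\alpha$, completing the proof. The main obstacle is largely cosmetic: all substantive work (the renewal at bridge times, the scale invariance, and the Hausdorff dimension computation) has been done earlier, so this corollary is an assembly step using classical subordinator theory, with the only verification being that the deterministic scaling relation above does indeed imply stability and not merely self-similarity in a weaker sense.
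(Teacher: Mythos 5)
Your proof is correct and follows essentially the same route as the paper: regularity of $D$ via Proposition \ref{DecompLocalTime} plus Bertoin's Theorem 2.1 yields a subordinator with closed range $D$; scale invariance from Theorem \ref{BridgeDimension}\eqref{prop:scaling} forces the Laplace exponent to satisfy a power-law functional equation, hence stability; and the index is pinned down by matching the Hausdorff dimension of the range to $2-2\alpha$. The only difference is cosmetic: you spell out the self-similarity argument (the uniqueness clause giving $(r\sigma_{\lambda}) \stackrel{d}{=} (\sigma_{c(r)\lambda})$), which the paper dismisses as "an easy step," but the content is identical.
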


The parameter $\lambda$ can be thought of as the local time corresponding to the subordinator. Recall that the local time for $\sigma$ is the function $\lambda : [0, \infty) \to [0, \infty)$ defined by $\lambda(s) := \inf \{ t \geq 0 : \sigma_t > s \}$, and it is well known in the subordinator literature that $\lambda$ is an increasing, continuous function which increases only on $D$. This means that if we run the restriction hulls on the $\lambda$ time scale, then the hull grows only when it is crossing bridge lines. For $\lambda \geq 0$ we define
\begin{align*}
\tau_h := \inf \{ t \geq 0 : \sup \Im(K_t) = h \},
\end{align*}
and
\begin{align*}
t(\lambda) := \tau_{\sigma_{\lambda}}.
\end{align*}
Note that $\sigma_{\lambda}$ is the bridge height at which $\lambda$ units of local time are first accumulated, and then $t(\lambda)$ is the time, in the original parameterization of the restriction hull, at which the local time first reaches $\lambda$. It follows that $t(\lambda)$ is an increasing, right-continuous process for which the closure of its image is precisely the set of bridge times $G$. Intervals of $\lambda$ on which the process $t(\lambda)$ is flat correspond to times at which the restriction hull is between bridge heights. Using the $t(\lambda)$ time-scale, we are able to define a Poisson Point Process taking values in the space of irreducible bridges rooted at the origin. Let $\delta$ be the curve which starts and ends at zero in zero time (i.e. $\delta : \{0 \} \to \{ 0 \})$. For $\lambda \geq 0$, define $e_{\lambda}$ by
\begin{align} \label{eDecomp}
e_{\lambda} = \left\{
  \begin{array}{ll}
    \theta_{t(\lambda-), t(\lambda)}K, & t(\lambda) > t(\lambda-) \\
    \delta, & t(\lambda) = t(\lambda-)
  \end{array}
\right.
\end{align}
From this we have the following:

\begin{proposition}
$e_{\lambda}$ is an $\left( \mathcal{F}_{t(\lambda)} \right)_{\lambda \geq 0}$ Poisson Point Process on the space of irreducible bridges.
\end{proposition}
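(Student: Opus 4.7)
The plan is to recognize this proposition as a direct instance of It\^{o}'s excursion theory, with the regenerative set being the set $G$ of bridge times, and with the strong Markov property established in Corollary \ref{ShiftedMarkovCorollary} playing the role of the usual Markov property for excursions. The two ingredients needed are (i) that each $e_\lambda$ with $t(\lambda) > t(\lambda-)$ is genuinely an irreducible bridge, and (ii) a shift-invariance and independence property under $(e_{\lambda_0 + \mu})_{\mu \geq 0}$ obtained by stopping at $t(\lambda_0)$. These two together plus simplicity of the jump set characterize $(e_\lambda)$ as a Poisson point process on the Polish space of irreducible bridges.

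First I would verify (i). The jump times of $\sigma$ coincide with the $\lambda$ for which $t(\lambda) > t(\lambda-)$, and the interval $(t(\lambda-), t(\lambda))$ is then a maximal connected component of $G^c$: by construction $\sigma_{\lambda-}, \sigma_\lambda \in D$ while the open interval $(\sigma_{\lambda-}, \sigma_\lambda)$ avoids $D$, so no bridge height of $K$ lies strictly between them. Consequently $K_{t(\lambda)} \setminus K_{t(\lambda-)}$ is a bridge with no interior bridge point, and after shifting its base back to the origin $e_\lambda$ is an irreducible bridge rooted at $0$.

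Next I would exploit the Markov property. For each fixed $\lambda_0 > 0$, the time $t(\lambda_0)$ is a $\G_t$-stopping time (since $\sigma_{\lambda_0}$ is $\mathcal{D}_{\sigma_{\lambda_0}}$-measurable and $t(\lambda_0)$ is the $\F_t$-measurable first-hitting time of this height by $K$) and almost surely lies in $G$. Applying Corollary \ref{ShiftedMarkovCorollary}, the hull $\theta_{t(\lambda_0)} K$ has law $\palph$ and is independent of $\G_{t(\lambda_0)} \supset \F_{t(\lambda_0)}$. Since the process $\mu \mapsto e_{\lambda_0 + \mu}$ is manufactured from $\theta_{t(\lambda_0)} K$ by the same recipe (local-time inverse, then restriction to jump intervals) that gave $(e_\mu)_{\mu \geq 0}$ from $K$, it has the same joint law as $(e_\mu)_{\mu \geq 0}$ and is independent of $\F_{t(\lambda_0)}$. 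Iterating this for $0 = \lambda_0 < \lambda_1 < \cdots < \lambda_k$ yields mutual independence of the restrictions of the point measure $N(A \times F) = \#\{\lambda \in A : e_\lambda \in F\}$ to the intervals $(\lambda_{i-1}, \lambda_i]$, together with stationarity.

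To conclude I would combine simplicity of the jump set of the c\`adl\`ag subordinator $\sigma$ with the independence and stationarity just established: this is the standard characterization of a Poisson point process on $[0,\infty) \times E$ with $E$ the Polish space of irreducible bridges, and the intensity is necessarily of the product form $d\lambda \otimes \nu_\alpha$ where $\nu_\alpha(F) := \E_\alpha[\#\{\lambda \in [0,1] : e_\lambda \in F\}]$. The $\sigma$-finiteness of $\nu_\alpha$ follows from the fact that a stable subordinator has only finitely many jumps above any fixed size in a finite $\lambda$-interval, so that the irreducible bridges of vertical extent bounded below by any $h > 0$ form a locally finite set on the $\lambda$-axis. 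The main technical obstacle I anticipate is the careful bookkeeping among the filtrations $\F_t$, $\G_t$, $\mathcal{D}_L$, and $(\F_{t(\lambda)})_{\lambda \geq 0}$: we must check that $t(\lambda_0)$ is genuinely a $\G$-stopping time at which Corollary \ref{ShiftedMarkovCorollary} applies, and that the post-stopping $\sigma$-algebra dominates $\F_{t(\lambda_0)}$ so that the independence asserted in the proposition (with respect to the $\F$-filtration, not the $\G$-filtration) actually follows.
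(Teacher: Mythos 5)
Your proof is correct and follows essentially the same approach as the paper's: both invoke the refreshing property from Corollary \ref{ShiftedMarkovCorollary} together with the standard characterization of Poisson point processes via independent, stationary counting increments (which the paper sources from Revuz--Yor). You spell out more of the bookkeeping that the paper compresses into ``but this is essentially a property of Corollary \ref{ShiftedMarkovCorollary}'' --- in particular the check that each $e_\lambda$ really is an irreducible bridge, that $t(\lambda_0)$ is a $\G_t$-stopping time valued in $G$, and that independence from the larger $\sigma$-algebra $\G_{t(\lambda_0)}$ implies the claimed independence from $\F_{t(\lambda_0)}$ --- which is a worthwhile elaboration but not a different method.
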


\begin{proof}
Take a subset $U$ of the set of irreducible bridges that doesn't contain $\delta$, and an interval $I := [\lambda_1, \lambda_2]$. As in \cite[Chapter XII]{revuz_yor}, one needs to show that the number of times that $e_{\lambda}$ belongs to $U$ for $\lambda \in I$ is independent of $\mathcal{F}_{t(\lambda_1)}$ and has the same law as the number of times that $e_{\lambda}$ belongs to $U$ for $\lambda \in [0, \lambda_2 - \lambda_1]$. But this is essentially a property of Corollary \eqref{ShiftedMarkovCorollary}.
\end{proof}

We denote by $\nu_{\alpha}$ the intensity measure of the Poisson Point Process $e_{\lambda}$, and we call it the \textbf{continuum irreducible bridge measure}. It conveniently encodes all the behavior of continuum irreducible bridges. For a set of irreducible bridges $E$, $\nu_{\alpha}(E)$ is simply the expected number of elements of $E$ that occur in $e[0,1]$, which may or may not be finite. For instance, if $E_L$ is the set of irreducible bridges with height greater than $L$, then a simple consequence of Corollary \ref{subordinatorCorollary} is that $\nu_{\alpha}(E_L) = c_{\alpha} L^{2\alpha - 2}$ for some fixed constant $c_{\alpha}$, and furthermore,
\begin{align} \label{condHeightLaw}
\textbf{P}_{\alpha}^L(\cdot) := \frac{\nu_{\alpha}(\cdot \cap E_L)}{\nu_{\alpha}(E_L)}
\end{align}
is exactly the law of the first irreducible bridge with height greater than $L$. To make the analogy with other well-known decompositions of stochastic processes, $\nu_{\alpha}$ is the equivalent of It\^o's measure on $1$-dimensional Brownian excursions, or Balint Vir\'{a}g's measure on $2$-dimensional Brownian Beads. Compared to half-plane SAWs, $\nu_{\alpha}$ is the analogue of the measure $\textbf{P}(\omega) = \beta^{-\leb{\omega}}$ on SAW irreducible bridges, although we point out that $\textbf{P}$ is a probability measure (by Kesten's relation), whereas $\nu_{\alpha}$ is infinite but $\sigma$-finite.

In the case of half-plane SAWs, the measure on paths is realized by concatenating together an i.i.d. sequence of irreducible bridges, each distributed according to $\textbf{P}$, and in the continuum a similar statement holds. If $(e_{\lambda})_{\lambda \geq 0}$ is a Poisson Point Process of irreducible bridges with intensity measure $\nu_{\alpha}$, then the concatenation
\begin{align*}
K = \bigoplus_{\lambda \geq 0} e_{\lambda}
\end{align*}
has the law of an index $\alpha$ restriction hull. Note, however, that we are not attempting to show that the irreducible bridges can be concatenated together in such a way as to reconstruct the sequence of growing hulls $(K_t)_{t \geq 0}$, even though this should be possible with enough care. Recall though that the time parameterization we are using for the restriction hulls is completely artificial to begin with, and therefore attempting to reconstruct it would mostly be an uninteresting and unuseful exercise.

\section{Open Questions \label{Open}}

In this final section we present some open questions that were raised by our work.

\begin{question}
What other properties of the irreducible bridge measure $\nu_{\alpha}$ can be derived?
\end{question}

Our work has essentially determined only one main property of bridges: that the distribution of their vertical height is the same as the jump distribution for a stable subordinator of index $2-2\alpha$ (up to a multiplicative constant). Ultimately we hope that much more can be said about irreducible bridges than this. It may be naturally difficult to say anything more, since even in the case of half-plane SAWs there is not much known about irreducible bridges (although in the ``off-critical'' case there are some results, see \cite[Chapter 4]{madras_slade:saw_book}). For other two-dimensional decompositions, notably Vir\'{a}g's Brownian Beads, it appears similarly difficult to say anything about the bead measure.

\begin{question}
Is there a constructive way of building irreducible bridges?
\end{question}

In the case of $\SLE(8/3)$, for example, is there a driving term for the Loewner equation that outputs irreducible bridges (perhaps with at least some specified vertical height)? And for general restriction measures with $\alpha < 1$, can some driving term for the Loewner equation be combined with the Brownian loop soup to produce irreducible bridges for restriction hulls?

\begin{question}
Is there a natural ``length'' that can be put on irreducible bridges?
\end{question}

For half-plane SAWs the length of the walk is simply the number of steps in it, and many results on SAWs are expressed in terms of this length. We expect that there is some way of defining a similar natural length on irreducible bridges, and that this length is somehow the scaling limit of the length for SAWs. However, because the irreducible bridges are fractal objects it is not an easy matter to define a non-trivial length on them. In the case of $\SLE(8/3)$ specifically, this question is closely related to the problem of the ``natural time parameterization'' for $\SLE$, which has recently been considered by Lawler and Sheffield \cite{lawler_sheff:time}. The key idea of their time parameterization is to build a length measure on the curve (that also has some other desirable properties), and then reparameterize in such a way that the length of the curve at time $t$ is $t$, as with the SAWs. Their length measure should also be a natural length measure for irreducible bridges.

\begin{question}
Is there some sort of continuous analogue of Kesten's relation?
\end{question}

This is closely related to the problem of the natural length on irreducible bridges described above. Supposing that $L(K)$ is the ``natural length'' of an irreducible bridge, and making an analogy with \eqref{kesten1}, we might expect that
\begin{align*}
\int_0^{\infty} \beta^{-l} \nu_{\alpha}\left( L(K) \in dl \right)
\end{align*}
is finite for $\beta < \mu$ but infinite for $\beta > \mu$, for some universal $\mu$, and then one can ask for the behavior at this critical $\mu$.

\begin{question}
Can the restriction hulls be time parameterized in such a way that the time parameterization also refreshes itself at bridge points?
\end{question}

Presently we are only showing that the hulls refresh themselves as sets and \textit{not} as time parameterized objects. But it is entirely plausible that there is some time parameterization which refreshes itself at bridge points along with the geometrical objects, especially considering that the counting parameterization for half-plane SAWs has this property (at each bridge point, one simply starts counting off the number of steps anew). It is possible that the natural time parameterization of Lawler and Sheffield will have this property for $\SLE(8/3)$ but it is not immediately clear that this will be the case, since their time parameterization has no way of seeing that it is currently at a bridge point and therefore is unlikely to refresh at such bridge times.

\begin{question}
Can some element of the bridge decomposition be used to prove the existence of, or at least heuristically deduce, critical exponents for half-plane SAWs or SAW bridges?
\end{question}

For example, it is conjectured that the number of $N$-step SAW bridges grows asymptotically like $N^{-\beta} u^N$ as $N \to \infty$, for the same $\mu$ as in \eqref{connectiveConstant} and some unknown constant $\beta$. Recently, Neal Madras has privately communicated to us his conjecture that $\beta = 7/16$, although this quantity was likely known beforehand in the physics literature. He uses two different methods to derive this value, the first being based purely on some heuristics for half-plane SAWs, and the other making use of the relation \eqref{condHeightLaw} and the conjecture that the scaling limit of half-plane SAWs is $\SLE(8/3)$. Being able to answer further questions of this type would be extremely helpful for studying half-plane SAWs.

\begin{question}\label{kappaQuestion}
Do bridge heights and lines exist for SLE($\kappa$) for values of $\kappa$ different from $8/3$. If so, what is the Hausdorff dimension of $C$ and $D$ and how does it depend on $\kappa$?
\end{question}

Currently we only know that at $\kappa = 0$ and $\kappa = 8/3$, the Hausdorff dimensions of $C$ and $D$ are $1$ and $3/4$, respectively (the $\kappa = 0$ result is clear from the fact that the corresponding SLE curve is a vertical line). We conjecture that the Hausdorff dimensions of $C$ and $D$ are always the same, and they are a strictly decreasing, continuous function of $\kappa$. When $\kappa = 4$ the Hausdorff dimension must certainly be zero since the SLE($4$) curve comes arbitrarily close to the real line, but we do not know if this is the smallest $\kappa$ for which the dimension is zero. We have no conjecture as to what that $\kappa$ might be, other than it is somewhere between $8/3$ and $4$.

We should briefly mention that, as a corollary of Theorem \ref{BridgeDimension}, we do have lower bounds on the Hausdorff dimension of $C$ and $D$ for $2 \leq \kappa \leq 8/3$. Since attaching loops to an SLE curve can only reduce the number of bridge points that the SLE curve has, we know

\begin{proposition}
Let $C$ and $D$ be the set of bridge points and heights for an SLE($\kappa$) curve, with $2 \leq \kappa \leq 8/3$. Then the Hausdorff dimensions of $C$ and $D$ are both almost surely constant, with $\haussdim C \geq 3 - \frac{6}{\kappa}$.
\end{proposition}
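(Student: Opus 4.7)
The plan is to exploit the coupling between $\SLE(\kappa)$ and the index-$\alpha$ restriction hull, where $\alpha = \alpha(\kappa) = 3/\kappa - 1/2$. For $\kappa \in [2, 8/3]$ we have $\alpha \in [5/8, 1]$, and the bound $3 - 6/\kappa$ coincides with $2 - 2\alpha$, which by Theorem \ref{BridgeDimension} is the Hausdorff dimension of the bridge points and heights of the restriction hull.

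The key monotonicity observation is that attaching independent loop-soup loops to the $\SLE(\kappa)$ curve $\gamma$ can only destroy bridge points, never create them. Concretely, let $K$ be the restriction hull obtained by the standard construction as $\gamma$ together with the filled-in loops $\eta \in \ls$ that meet $\gamma$, with $\lambda = (8-3\kappa)\alpha$. Denote by $C_\gamma, D_\gamma$ the bridge points and heights of the $\SLE(\kappa)$ curve alone, and by $C_K, D_K$ those of $K$. As already recorded in Section \ref{PrelimSection}, every bridge point of $K$ lies on $\gamma$. Hence if $z \in C_K$ then $z \in \gamma \subseteq K$ and the horizontal line $y = \Im z$ meets $K$ only at $z$; since $\gamma \subseteq K$, it a fortiori meets $\gamma$ only at $z$, so $z \in C_\gamma$. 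Thus $C_K \subseteq C_\gamma$ and $D_K \subseteq D_\gamma$ deterministically in this coupling. Combining with Theorem \ref{BridgeDimension}\eqref{prop:hdim} applied to $K$ (whose law is $\palph$) yields, on the full-measure event where $\haussdim C_K = 2 - 2\alpha$,
\begin{equation*}
\haussdim C_\gamma \;\ge\; \haussdim C_K \;=\; 2 - 2\alpha \;=\; 3 - \tfrac{6}{\kappa},
\end{equation*}
and similarly for $D_\gamma$. This delivers the desired lower bound almost surely, with no need for any positive-probability loophole.

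For almost sure constancy of $\haussdim C_\gamma$ and $\haussdim D_\gamma$, I would repeat the Blumenthal $0$-$1$ law argument used in the proof of Theorem \ref{BridgeDimension}\eqref{prop:constantdim}. Set $C_t(s) := \{\text{bridge points of } \gamma[0,s]\} \cap \gamma[0,t]$ for $0 \le t \le s$, and for a fixed threshold $d>0$ let $W_t(s) = \{\haussdim C_t(s) \ge d\}$ and $V_s = \bigcap_n W_{1/n}(s)$. As in the paper, $V_s$ is decreasing in $s$, and an element of $V_s \setminus V_\infty$ would force the future curve $\gamma[s,\infty)$ to destroy bridge points arbitrarily close to the origin, which (since $\gamma[s,\infty)$ is almost surely bounded away from $\R$) has probability zero. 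Hence $\bigcap_n V_{1/n} = V_\infty$ almost surely. The event on the left is $\F_{0+}$-measurable, and since the law of $\SLE(\kappa)$ is the pushforward of Wiener measure by the Loewner equation, Blumenthal's $0$-$1$ law applies directly and gives $\prob{V_\infty} \in \{0,1\}$, establishing the constancy of $\haussdim C_\gamma$. The same argument works for $D_\gamma$.

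The main subtlety I anticipate is justifying the step where the future of the $\SLE(\kappa)$ curve cannot destroy bridge points of the past hull; this relies on the fact that $\gamma[s,\infty)$ stays bounded away from $\R$ and that loops (which are the mechanism by which $K$-bridge points of the past can be overrun in the restriction setting) play no role when one considers $\gamma$ alone. Once this is in place, the existence and structure of bridge points of $\gamma$ itself is not required for the lower bound --- the coupling bound $C_K \subseteq C_\gamma$ transfers the (already-proved) lower bound on $\haussdim C_K$ directly to $\gamma$. Note that for $\kappa = 2$ the bound $3 - 6/\kappa = 0$ is trivial, consistent with Theorem \ref{BridgeDimension}\eqref{prop:emptyC} which says that $C_K = \emptyset$ a.s. when $\alpha \ge 1$.
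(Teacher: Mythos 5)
Your proof is correct and takes essentially the same route as the paper: you use the coupling $C_K \subseteq C_\gamma$ coming from the loop-attachment construction (bridge points of $K$ lie on $\gamma$ and $\gamma \subseteq K$, so a line meeting $K$ only once meets $\gamma$ only once) to transfer the lower bound $2-2\alpha = 3 - 6/\kappa$ from Theorem \ref{BridgeDimension}\eqref{prop:hdim}, and you repeat the Blumenthal $0$-$1$ law argument from part \eqref{prop:constantdim} for the almost sure constancy. The paper leaves both of these steps as one-line remarks; you have simply spelled them out.
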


This lower bound is probably far from sharp, since it is increasing with $\kappa$ rather than decreasing. To prove that the Hausdorff dimensions of $C$ and $D$ are almost surely constant, Theorem \ref{BridgeDimension} part \eqref{prop:constantdim} can be used without modification.

\bibliographystyle{alpha}
\bibliography{../../Restrict}

\begin{thebibliography}{LSW04}

\bibitem[AS08]{alberts_sheff:dimension}
Tom Alberts and Scott Sheffield.
\newblock Hausdorff dimension of the sle curve intersected with the real line.
\newblock {\em Electron. Jour. Probab.}, 13:1166--1188 (electronic), 2008.

\bibitem[Bef08]{beffara:curvedim}
Vincent Beffara.
\newblock The dimension of the {SLE} curves.
\newblock {\em Ann. Probab.}, 36(4):1421--1452, 2008.

\bibitem[Ber99]{bertoin:subordinators}
Jean Bertoin.
\newblock Subordinators: examples and applications.
\newblock In {\em Lectures on probability theory and statistics
  ({S}aint-{F}lour, 1997)}, volume 1717 of {\em Lecture Notes in Math.}, pages
  1--91. Springer, Berlin, 1999.

\bibitem[Dub06]{dubedat:excursions}
Julien Dub{\'e}dat.
\newblock Excursion decompositions for {SLE} and {W}atts' crossing formula.
\newblock {\em Probab. Theory Related Fields}, 134(3):453--488, 2006.

\bibitem[Ken07]{kennedy:algo}
Tom Kennedy.
\newblock A fast algorithm for simulating the chordal {S}chramm-{L}oewner
  evolution.
\newblock {\em J. Stat. Phys.}, 128(5):1125--1137, 2007.

\bibitem[Kes63]{kesten:saw1}
Harry Kesten.
\newblock On the number of self-avoiding walks.
\newblock {\em J. Mathematical Phys.}, 4:960--969, 1963.

\bibitem[Kes64]{kesten:saw2}
Harry Kesten.
\newblock On the number of self-avoiding walks. {II}.
\newblock {\em J. Mathematical Phys.}, 5:1128--1137, 1964.

\bibitem[Law96]{lawler:cutpoints}
Gregory~F. Lawler.
\newblock Hausdorff dimension of cut points for {B}rownian motion.
\newblock {\em Electron. J. Probab.}, 1:no.\ 2, approx.\ 20 pp.\ (electronic),
  1996.

\bibitem[Law05]{lawler:book}
Gregory~F. Lawler.
\newblock {\em Conformally Invariant Processes in the Plane}, volume 114 of
  {\em Mathematical Surveys and Monographs}.
\newblock American Mathematical Society, Providence, RI, 2005.

\bibitem[LS09]{lawler_sheff:time}
Gregory~F. Lawler and Scott Sheffield.
\newblock Construction of the natural parameterization for {SLE} curves.
\newblock {\em arXiv:0906.3804v1 [math.PR]}, 2009.

\bibitem[LSW03]{lsw:conformal_restriction}
Gregory Lawler, Oded Schramm, and Wendelin Werner.
\newblock Conformal restriction: the chordal case.
\newblock {\em J. Amer. Math. Soc.}, 16(4):917--955 (electronic), 2003.

\bibitem[LSW04]{lsw:saw}
Gregory~F. Lawler, Oded Schramm, and Wendelin Werner.
\newblock On the scaling limit of planar self-avoiding walk.
\newblock In {\em Fractal geometry and applications: a jubilee of Beno\^\i t
  Mandelbrot, Part 2}, volume~72 of {\em Proc. Sympos. Pure Math.}, pages
  339--364. Amer. Math. Soc., Providence, RI, 2004.

\bibitem[LW04]{lawler_werner:loop_soup}
Gregory~F. Lawler and Wendelin Werner.
\newblock The {B}rownian loop soup.
\newblock {\em Probab. Theory Related Fields}, 128(4):565--588, 2004.

\bibitem[MS93]{madras_slade:saw_book}
Neal Madras and Gordon Slade.
\newblock {\em The self-avoiding walk}.
\newblock Probability and its Applications. Birkh\"auser Boston Inc., Boston,
  MA, 1993.

\bibitem[RY99]{revuz_yor}
Daniel Revuz and Marc Yor.
\newblock {\em Continuous martingales and {B}rownian motion}, volume 293 of
  {\em Grundlehren der Mathematischen Wissenschaften [Fundamental Principles of
  Mathematical Sciences]}.
\newblock Springer-Verlag, Berlin, third edition, 1999.

\bibitem[SZ07]{schramm_zhou:dimension}
Oded Schramm and Wang Zhou.
\newblock Boundary proximity of {SLE}.
\newblock arXiv:0711.3350v2 [math.PR], 2007.

\bibitem[Vir03]{virag:beads}
B{\'a}lint Vir{\'a}g.
\newblock Brownian beads.
\newblock {\em Probab. Theory Related Fields}, 127(3):367--387, 2003.

\end{thebibliography}
\end{document}